\documentclass[reqno, a4paper, 10pt]{amsart}
\usepackage{amssymb,url, color, mathrsfs}
\usepackage[colorlinks=true, bookmarks=true, pdfstartview=FitH, pagebackref=true, linktocpage=true]{hyperref}
\usepackage[short,nodayofweek]{datetime}
\usepackage[pagewise,running,mathlines,displaymath, switch]{lineno}
\usepackage{times}
\usepackage{indentfirst, tabularx}
\usepackage[table]{xcolor}
\usepackage{float, hhline, colortbl}
\usepackage{graphicx}
\usepackage{arydshln} % for using \hdashline only
\usepackage{setspace}

%%%%%%%%%%%%%%%%%%%%%%%%%%%%%%%%%%%%%%%
\hypersetup{
pdftitle={Supercritical MT} 
pdfauthor={ABC},
colorlinks = true,
linkcolor = magenta,
citecolor = blue,
}

%%%%%%%%%%%%%%%%%%%%%%%%%%%%Dinh nghia, dinhly
\theoremstyle{plain}
\newtheorem{theorem}{Theorem}[section]

%%%%%%%%%%%%%%%%%%%%%%%%%%%%%%%%%%%%%%%%%%%%%%%

\newtheorem{lemma}[theorem]{Lemma}

\theoremstyle{definition}

\theoremstyle{remark}

%\theoremstyle{example}

%%%%%%%%%%%%%%%%%%%%%%%%%%%%%%%%%%%%%%%%%%%dinh nghia mot so ki hieu

%%%%%%%%%%%%%%%%%%%%%%%%%%%%%%%%%%%%%%%%%%%dinh nghia mot so ki hieu

%%%%%%%%%%%%%%%%%%%%%%%%%%%%%%%%%%%%%%%%%%%%%%%%%%%%%%%%%%%%%%%%%%%%%%
 %espace C^1
 %semi-groupe P_1
 %generator
 %operateur gamma 1
 %fonction indicatrice
 % ||1||
 %entropie
 %variance
 %oscillation
 %capacite

%%%%%%%%%%%%%%%%%%%%%%%%%%%%%%%%%%%%%%%%%%%%%%viet tat
\def\R{\mathbb R}% tap so thuc
% tap so tu nhien
% tap so phuc
% tap so nguyen

% trace of matrix
% variance
%toan tu dau
%\def\dWs{\dot{W}^s(\R^n)}

%%%%%%%%%%%%%%%%%%%%%%%%%%%%%%%%%%%%%%%%%%%%%%%%%%%%% ki hieu toan hoc tat
\def\al{\alpha}% alpha
\def\om{\omega}% omega
% Omega
% beta
\def\ga{\gamma}% gamma
% nho hon
% lon hon
\def\de{\delta}% delta
\def\De{\Delta} % Delta

% Sigma
% sigma
% tong
% theta
 % Theta
% lambda
% Lambda
\def\vphi{\varphi}% varphi
\def\ep{\epsilon}% epsilon
% varepsilon
% varrho
\def\na{\nabla}% nabla
% tap trong
% hieu hai tap
% tam giac
\def\pa{\partial}% dao ham rieng
 %goc trai
 %goc phai
\def\lt{\left}% trai
\def\rt{\right}% phai

%%%%%%%%%%%%%%%%%%%%%%%%%%%%%%%%%%%%%%%%%%%%%%%%%%%%%%%%%danh so

\def\H1r{H^1_{0,{\rm rad}}(B)}

\def\Wn{W^{1,n}_{0}(B)}
\def\Wnr{W^{1,n}_{0,{\rm rad}}(B)}
\def\Wmp{W^{m,p}_{0}(B)}
\def\Wmpr{W^{m,p}_{0,{\rm rad}}(B)}
\def\Wm2r{W^{m,2}_{0,{\rm rad}}(B)}
\def\Cr{C^\infty_{0,{\rm rad}}(B)}

\def\MT{\mathsf{MT}_n}

\def\s2{2^\star}
\def\2ms{2_m^\star}

\numberwithin{equation}{section}

\makeatletter
\def\@cite#1#2{[\textbf{#1}\if@tempswa, #2\fi]}
\makeatother

\title{Supercritical }
\parskip=4pt

\def\cfac#1{\ifmmode\setbox7\hbox{$\accent"5E#1$}\else\setbox7\hbox{\accent"5E#1}\penalty 10000\relax\fi\raise 1\ht7\hbox{\lower1.05ex\hbox to 1\wd7{\hss\accent"13\hss}}\penalty 10000\hskip-1\wd7\penalty 10000\box7 }

\title[Supercritical Moser--Trudinger inequalities]{Supercritical Moser--Trudinger inequalities and related elliptic problems}

\author[Q.A. Ng\^o]{Qu\cfac{o}c Anh Ng\^o}

%\address[Q.A. Ng\^o]{Institute of Research and Development\\
%Duy T\^an University\\ D\`a N\v ang, Vi\^et Nam.}

\address[Q.A. Ng\^o]{Department of Mathematics\\
College of Science, Vi\^{e}t Nam National University\\
H\`{a} N\^{o}i, Vi\^{e}t Nam.}
\email{\href{mailto: Q.A. Ng\^o <nqanh@vnu.edu.vn>}{nqanh@vnu.edu.vn}}
\email{\href{mailto: Q.A. Ng\^o <bookworm\_vn@yahoo.com>}{bookworm\_vn@yahoo.com}}

\author[V.H. Nguyen]{Van Hoang Nguyen}

\address[V.H. Nguyen]{Institute of Mathematics\\
Vietnam Academy of Science and Technology\\
Hanoi, Vietnam.}

\email{\href{mailto: V.H. Nguyen <vanhoang0610@yahoo.com>}{vanhoang0610@yahoo.com}}
\email{\href{mailto: V.H. Nguyen <nvhoang@math.ac.vn>}{nvhoang@math.ac.vn}}

\allowdisplaybreaks

\begin{document}

\begin{abstract}
Given $\alpha >0$, we establish the following two supercritical Moser--Trudinger inequalities
\[
\sup\limits_{\begin{subarray}{c} 
u \in \Wnr: 
\int_B |\nabla u|^n dx \leq 1
\end{subarray}} 
\int_B \exp\big( (\alpha_n + |x|^\al) |u|^{\frac{n}{n-1}} \big) dx < +\infty
\]
and
\[
\sup\limits_{\begin{subarray}{c} 
u\in \Wnr: 
\int_B |\nabla u|^n dx \leq 1
\end{subarray}}
\int_B \exp\big( \alpha_n |u|^{\frac{n}{n-1} + |x|^\alpha} \big) dx < +\infty,
\]
where $\Wnr$ is the usual Sobolev spaces of radially symmetric functions on $B$ in $\mathbb R^n$ with $n\geq 2$. Without restricting to the class of functions $\Wnr$, we should emphasize that the above inequalities fail in $\Wn$. Questions concerning the sharpness of the above inequalities as well as the existence of the optimal functions are also studied. To illustrate the finding, an application to a class of boundary value problems on balls is presented. This is the second part in a set of our works concerning functional inequalities in the supercritical regime.
\end{abstract}

\date{\bf \today \; at \currenttime}

\subjclass[2000]{46E35, 26D10, 35A15}

\keywords{supercritical Moser--Trudinger inequality; sharp constant; optimizer; elliptic problems with critical growth}

\maketitle

\section{Introduction}

This is the second part in a set of our works concerning functional inequalities in the supercritical regime. Previously in \cite{NN19}, given $n \geq 3$, together with \cite{doO} we have shown that there is a continuous embedding
\begin{equation}\label{eq:Sobolev2}
\Wm2r \hookrightarrow L_{2_m^\star + |x|^\alpha} (B)
\end{equation}
with $1 \leq m < n/2$, $2_m^* = 2n/(n-2m)$, and $\alpha \geq 0$. In the embedding \eqref{eq:Sobolev2}, $B$ denotes the unit ball in $\R^n$ and $\Wmpr$ is a subspace of the Sobolev space $\Wmp$, which consists of only radially symmetric functions. Here by `radially symmetric' we mean `radially symmetric about the origin'. If we further denote by $\Cr$ the class of compactly supported, smooth, radially symmetric functions in $B$, then the space $\Wmpr$ can also be defined as the completion of $\Cr$ under the norm
\[
\|u\|_{\Wmpr} = \Big( \int_B |\nabla^m u|^p dx \Big) ^{1/p},
\]
where, for an integer $m \geq 1$, we use the following notation
\[
\nabla^m = \begin{cases}
\De^{m/2} &\mbox{if $m$ is even},\\
\nabla \De^{(m-1)/2} &\mbox{if $m$ is odd}.
\end{cases}
\]
When $m=1$ and $\alpha=0$, the embedding \eqref{eq:Sobolev2} belongs to a wider class of inequalities, which state that the following embedding 
\begin{equation}\label{eq:Sobolevp}
W_0^{1,p} (B) \hookrightarrow L_{2n/(n-p)} (B)
\end{equation}
holds whenever $p<n$. In general, one cannot take the limit as $q \nearrow n$, that is, the following embedding
\[
\Wn \hookrightarrow L_\infty (B)
\]
is no longer available. Instead, Trudinger's inequality \eqref{eq:Moser--Trudinger} below provides us a perfect replacement, namely, there holds
\[
\Wn \hookrightarrow e^{L_{n/(n-1)}} (B).
\]
The choice of the space $e^{L_{n/(n-1)}} (B)$ stems from the fact that given any $u \in \Wn$, there holds
\[
 \int_\Omega \exp\big(\gamma |u|^\frac{n}{n-1} \big) dx < +\infty 
\]
for any $\gamma \geq 0$. Historically, Trudinger's inequality on bounded domains was established independently by Yudovi\v c \cite{Y1961}, Poho\v zaev \cite{P1965}, and Trudinger \cite{T1967}. It is stated that there is some constant $\gamma > 0$ such that
\[
\sup\limits_{\begin{subarray}{c} 
u \in W_0^{1,n} (B):
 \int_\Omega |\nabla u|^n dx \leq 1
\end{subarray}}
 \int_\Omega \exp\big(\gamma |u|^\frac{n}{n-1} \big) dx < +\infty 
\]
Later, by sharpening Trudinger's inequality, Moser \cite{M1970} proved that there exists a dimensional constant $\alpha_n > 0$ such that the above inequality holds for any $\gamma \leq \alpha_n$, namely
\begin{equation}\label{eq:Moser--Trudinger}
\sup\limits_{\begin{subarray}{c} 
u \in W_0^{1,n} (B):
 \int_\Omega |\nabla u|^n dx \leq 1
\end{subarray}}
 \int_\Omega \exp\big(\gamma |u|^\frac{n}{n-1} \big) dx < +\infty 
%\tag{MT${}^\R_b$}
\end{equation}
holds for any $\gamma\leqslant \alpha_n$ and for any bounded domain $\Omega$ in $\R^n$. Remarkably, Moser was able to compute the constant $\alpha_n$ precisely, that is
\[
\alpha_n = n^{n/(n-1)} \Omega_n^{1/(n-1)},
\] 
where $\Omega_n$ denotes the volume of the unit ball $\mathbb B^n$ in $\R^n$. If we denote by $\omega_n$ the volume of the unit sphere $\mathbb S^n$ in $\R^{n+1}$, then
\[
\alpha_n = n \omega_{n-1}^{1/(n-1)}.
\]
Apparently, Inequality \eqref{eq:Moser--Trudinger}, also known as the Moser--Trudinger inequality, can be thought of as a limiting case of the well-known Sobolev inequality \eqref{eq:Sobolevp}. Since Inequality \eqref{eq:Moser--Trudinger} and its variants have many applications in many aspects of analysis, generalizing of \eqref{eq:Moser--Trudinger} has already been a hot research topic and a huge set of works have already been written within the last two decades.

Back to Inequality \eqref{eq:Moser--Trudinger}, the following demonstrates that \eqref{eq:Moser--Trudinger} is the best possible if one only works on the class of functions in $\Wn$. First, the constant $\alpha_n$ in \eqref{eq:Moser--Trudinger} is sharp in the sense that if $\gamma > \alpha_n$, then there exists a sequence of functions $(u_j)_j$ in $\Wn$ with $ \int_\Omega |\nabla u|^n dx =1$ and with 
\[
 \int_\Omega \exp\big(\gamma |u_j|^\frac{n}{n-1} \big) dx \nearrow +\infty
\]
as $j \to +\infty$, which implies that the supremum in \eqref{eq:Moser--Trudinger} becomes infinity. Furthermore, the exponent $n/(n-1)$ in \eqref{eq:Moser--Trudinger} is also sharp because this is the maximal growth. There are examples of functions $u$ such that the left hand side of \eqref{eq:Moser--Trudinger} becomes infinite if one replaces either $\alpha_n$ or $n/(n-1)$ by any greater number.

Following the strategy shown in \cite{NN19, doO}, in this part of the program, we are interested in the classical Moser--Trudinger inequality \eqref{eq:Moser--Trudinger} in the supercritical regime. To be more precise, we aim to improve the threshold $\alpha_n$ as well as the exponent $n/(n-1)$. %Together with Sobolev inequalities, Moser--Trudinger and other analytic inequalities can be considered as one of the main tools in many areas of mathematics such as analysis, partial differential equations, calculus of variations, etc.
Since the Moser--Trudinger inequality \eqref{eq:Moser--Trudinger} is monotone increasing with respect to $\gamma$, let us focus on the following
\begin{equation}\label{eq:MT}
\sup\limits_{\begin{subarray}{c} 
u \in \Wn:
 \int_\Omega |\nabla u|^n dx \leq 1
\end{subarray}}
 \int_\Omega \exp\big(\alpha_n |u|^\frac{n}{n-1} \big) dx < +\infty
\end{equation}
and only call \eqref{eq:MT} the Moser--Trudinger inequality. We can also denote by $\MT$ the left hand side of \eqref{eq:MT}, namely
\[
\MT = \sup_{u\in \Wnr, \int_B |\nabla u|^n dx \leq 1} \int_B \exp\big(\alpha_n |u|^{\frac{n}{n-1}} \big) dx,
\]
which is positive and finite. 

To quickly identify our improvement for the classical Moser--Trudinger inequality, let us jump into our first result, which consists of two supercritical Moser--Trudinger type inequalities:

\begin{theorem}\label{SuperMT}
Let $\alpha >0$ and $n \geq 2$. Then we have
\begin{equation}\label{eq:SuperMT1}
\sup\limits_{\begin{subarray}{c} 
u \in \Wnr:
\int_B |\nabla u|^n dx \leq 1
\end{subarray}} 
\int_B \exp\big( (\alpha_n + |x|^\al) |u|^{\frac{n}{n-1}} \big) dx < +\infty
\end{equation}
and
\begin{equation}\label{eq:SuperMT2}
\sup\limits_{\begin{subarray}{c} 
u\in \Wnr:
\int_B |\nabla u|^n dx \leq 1
\end{subarray}}
\int_B \exp\big( \alpha_n |u|^{\frac{n}{n-1} + |x|^\alpha} \big) dx < +\infty,
\end{equation}
where $\alpha_n = n \omega_{n-1}^{1/(n-1)}$.
\end{theorem}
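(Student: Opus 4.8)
The plan is to exploit the classical radial lemma, which for $u\in\Wnr$ with $\int_B|\nabla u|^n\,dx\le 1$ gives the pointwise bound
\[
|u(x)| \le \Big(\frac{n}{\alpha_n}\Big)^{\frac{n-1}{n}}\big(\log\tfrac1{|x|}\big)^{\frac{n-1}{n}} + C
\]
for $|x|$ away from $0$, where $C$ depends only on $n$ (this follows from Jensen/Hölder applied to $u(r)=-\int_r^1 u'(s)\,ds$ together with the $n$-energy constraint). The key observation is that on the region near the boundary, $|x|^\alpha$ (resp. the exponent perturbation $|x|^\alpha$) is small, so the extra factor is a harmless perturbation; the only delicate region is a neighborhood of the origin, and there we will control the integrand using the sharp bound on $|u(0)|$, namely $|u(x)|\le |u(0)| \le (\text{something})\cdot(\log\frac1{|x|})^{(n-1)/n}+C$ together with the fact that near the origin $|x|^\alpha\le 1$ so that $\alpha_n+|x|^\alpha$ stays below $\alpha_n+\varepsilon$ only if $|x|$ is small — but this is still $>\alpha_n$, so a naive estimate fails. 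The correct route for \eqref{eq:SuperMT1} is to split $B=B_\rho\cup(B\setminus B_\rho)$ and on $B_\rho$ use that $(\alpha_n+|x|^\alpha)|u|^{\frac{n}{n-1}}\le \alpha_n|u|^{\frac{n}{n-1}} + |x|^\alpha |u|^{\frac n{n-1}}$, then bound $|x|^\alpha|u|^{\frac n{n-1}}\le |x|^\alpha\big((\frac n{\alpha_n})^{\frac{n-1}{n}}(\log\frac1{|x|})^{\frac{n-1}{n}}+C\big)^{\frac n{n-1}} \lesssim |x|^\alpha\log\frac1{|x|} + C'\le C''$ uniformly on $B$, since $t^\alpha\log\frac1t$ is bounded on $(0,1]$. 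Hence $\exp\big((\alpha_n+|x|^\alpha)|u|^{\frac n{n-1}}\big)\le e^{C''}\exp\big(\alpha_n|u|^{\frac n{n-1}}\big)$, and integrating and invoking the Moser--Trudinger inequality \eqref{eq:MT} (which is finite, $=\MT$) closes the argument.

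For \eqref{eq:SuperMT2} the same philosophy applies but the bookkeeping is slightly different because the perturbation is in the exponent. Write $|u|^{\frac n{n-1}+|x|^\alpha} = |u|^{\frac n{n-1}}\cdot |u|^{|x|^\alpha}$. On the set where $|u(x)|\le 1$ we have $|u|^{|x|^\alpha}\le 1$ so the integrand is dominated by $\exp(\alpha_n|u|^{\frac n{n-1}})$ and we are done by \eqref{eq:MT}. On the set where $|u(x)|>1$, use the radial bound $|u(x)|\le A(\log\frac1{|x|})^{\frac{n-1}{n}}+C$; then $|u|^{|x|^\alpha}\le \exp\big(|x|^\alpha\log|u|\big)\le \exp\big(|x|^\alpha\log(A(\log\tfrac1{|x|})^{\frac{n-1}{n}}+C)\big)$, and since $t^\alpha\log\log\frac1t\to 0$ as $t\to 0^+$ and the quantity is bounded for $t$ bounded away from $0$, we get $|u|^{|x|^\alpha}\le M$ for a constant $M=M(n,\alpha)$ independent of $u$. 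Therefore $\exp\big(\alpha_n|u|^{\frac n{n-1}+|x|^\alpha}\big)\le \exp\big(\alpha_n M|u|^{\frac n{n-1}}\big)$ on that set; this exceeds the allowed Moser constant, so a further splitting is needed near the origin, using $|x|^\alpha$ genuinely small there to absorb the loss. Concretely, fix $\rho=\rho(n,\alpha)$ so small that $|x|^\alpha\log|u(x)|$ contributes a multiplicative factor on $|u|^{\frac n{n-1}}$ arbitrarily close to $1$ on $B_\rho$ — this is possible because the logarithmic growth of $|u|$ is beaten by the power $|x|^\alpha$ — while on $B\setminus B_\rho$ the function $u$ is uniformly bounded (again by the radial lemma, since $\log\frac1{|x|}\le\log\frac1\rho$), so the integrand there is bounded by a constant times $|B|$.

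The main obstacle, and where care is genuinely required, is making the ``absorb the supercritical loss near the origin'' step rigorous: one must show that for every $\varepsilon>0$ there is $\rho>0$, depending only on $n,\alpha$ and not on $u$, such that $|x|^\alpha|u(x)|^{\frac n{n-1}}\le \varepsilon\,\alpha_n^{-1}\log\frac1{|x|}+C_\varepsilon$ on $B_\rho$ (for the first inequality) and the analogous statement for the exponent version, so that the resulting exponent stays below the Moser threshold $\alpha_n$ after the perturbation is incorporated. This hinges entirely on the sharp radial estimate $|u(x)|^{\frac n{n-1}}\le \frac n{\alpha_n}\log\frac1{|x|}+o(\log\frac1{|x|})$ with constants uniform over the constraint set, which is exactly the content of Moser's radial reduction; combined with $\lim_{t\to0^+}t^\alpha\log\frac1t=0$ this gives the needed uniform smallness. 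Once this uniform bound is in hand, both suprema are controlled by $e^{C}\,\MT\,+\,C|B|<+\infty$, and one should also remark, to match the abstract's assertion, that the restriction to radial functions is essential: the construction of Moser-type concentrating sequences not centered at the origin (for which $|x|^\alpha$ stays bounded below) shows that in the full space $\Wn$ both left-hand sides are infinite, so this plan deliberately uses radiality through the pointwise decay estimate.
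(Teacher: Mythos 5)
Your overall strategy is the same as the paper's: reduce to the radial pointwise estimate
\[
|u(r)| \le \Big(\frac n{\alpha_n}\Big)^{\frac{n-1}n}(-\log r)^{\frac{n-1}n}
\]
(note: no ``$+C$'' is needed, this holds exactly for all $r\in(0,1)$ with $\|\nabla u\|_n\le 1$), and then show that the supercritical perturbation contributes a uniformly bounded factor, so that the Moser--Trudinger constant $\MT$ controls everything. For \eqref{eq:SuperMT1} your argument is correct and essentially matches the paper specialized to $f(r)=r^\alpha$: the additive error on the exponent is $|x|^\alpha|u|^{\frac n{n-1}} \le \frac n{\alpha_n}|x|^\alpha(-\log|x|)$, which is bounded on $(0,1)$ because $t^\alpha\log\tfrac1t$ is bounded; hence $\exp\big((\alpha_n+|x|^\alpha)|u|^{\frac n{n-1}}\big)\le e^{C}\exp\big(\alpha_n|u|^{\frac n{n-1}}\big)$ pointwise and the conclusion follows from $\MT<\infty$. (You did not even need the $B_\rho$ split here.)

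For \eqref{eq:SuperMT2} there is a genuine gap in your writeup. The correct quantity to control is the \emph{additive} perturbation of the exponent,
\[
\alpha_n|u|^{\frac n{n-1}}\big(|u|^{|x|^\alpha}-1\big),
\]
and what one must show is that this is uniformly bounded on $B_\rho$ for $\rho=e^{-\alpha_n/n}$ (where $|u|\le (-\tfrac n{\alpha_n}\log r)^{(n-1)/n}\ge 1$ is available). Using the radial estimate this is
\[
\le -n\log r\Big[\big(-\tfrac n{\alpha_n}\log r\big)^{\frac{n-1}n r^\alpha}-1\Big] \sim (n-1)\, r^\alpha(-\log r)\,\log\big(-\tfrac n{\alpha_n}\log r\big) \to 0
\]
as $r\to 0$, so it is indeed uniformly bounded --- but this requires $\lim_{t\to 0^+}t^\alpha\log\tfrac1t\cdot\log\log\tfrac1t = 0$, which is a slightly different (and in fact genuinely stronger) fact than the $\lim_{t\to0^+}t^\alpha\log\tfrac1t=0$ you cite, and you never carry out this expansion. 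Your phrases ``multiplicative factor on $|u|^{n/(n-1)}$ arbitrarily close to $1$'' and ``the resulting exponent stays below the Moser threshold $\alpha_n$'' are both misleading: a multiplicative factor $1+\varepsilon$ applied to $\alpha_n$ still \emph{exceeds} the Moser threshold, and the actual exponent does exceed $\alpha_n|u|^{n/(n-1)}$ --- it is only the excess that is bounded. Similarly, the quantitative claim you offer, $|x|^\alpha|u|^{n/(n-1)}\le\varepsilon\alpha_n^{-1}\log\tfrac1{|x|}+C_\varepsilon$, produces a factor $|x|^{-\varepsilon/\alpha_n}$ after exponentiation, which does not directly give the clean bound $e^C\MT$ you announce. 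So the plan is recoverable, but the key estimate that closes the second inequality is not carried out, and the heuristics you supply in its place are not right. As an aside, the paper's proof of the general Theorem \ref{SuperMT*} uses a different bookkeeping for this error term (bounding $\exp(\alpha_n|u|^{n/(n-1)})r^{n-1}\le r^{-1}$ and then demanding that $k(r)$ be integrable against $r^{-1}\,dr$, which requires $\gamma>2$ in ($f_2'$)); the uniform-bound route you aim for would actually suffice under the weaker hypothesis $\gamma>1$, so if carried out correctly it gives a mild improvement.
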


In the first glimpse, the improved inequalities \eqref{eq:SuperMT1} and \eqref{eq:SuperMT2} seem too strong to be true. Indeed, if we do not restrict ourselves to the class of functions $\Wnr$ and only consider functions in $\Wn$, then Theorem \ref{SuperMT} is \textit{no longer} true. The idea is to select a region of $B$ far from the origin and also far from the boundary where the extra term $|x|^\alpha$ has some role. Indeed, fix some $x_0 \in B$ with $|x_0|=1/2$ and choose $r = 1/4$. Since any function in $w \in W_0^{1,n}(B_r(x_0))$ with $\int_{B_r(x_0)} |\nabla w|^n dx \leq 1$ also belongs to $W_0^{1,n}(B)$ with $\int_B |\nabla w|^n dx \leq 1$, we deduce that
\begin{align*}
\sup\limits_{\begin{subarray}{c} 
u\in \Wn : \\\int_B |\nabla u|^n dx \leq 1
\end{subarray}} 
\int_B \exp\big( &(\alpha_n + |x|^\al) |u|^{\frac{n}{n-1}} \big) dx \\
&\geq 
\sup\limits_{\begin{subarray}{c} 
w\in W^{1,n}_{0}(B_r(x_0)): \\ \int_{B_r(x_0)} |\nabla w|^n dx \leq 1
\end{subarray}} 
\int_{B_r(x_0)} \exp\big( (\alpha_n + |x|^\al) |w|^{\frac{n}{n-1}} \big) dx.
\end{align*}
However, within $B_r(x_0)$, we always have $|x| \geq 1/4$. Hence
\begin{align*}
\sup\limits_{\begin{subarray}{c} 
u\in \Wn:\\ \int_B |\nabla u|^n dx \leq 1
\end{subarray}} 
& \int_B \exp\big( (\alpha_n + |x|^\al) |u|^{\frac{n}{n-1}} \big) dx \\
& \geq 
\sup\limits_{\begin{subarray}{c} 
w\in W^{1,n}_{0}(B_r(x_0)): \\ \int_{B_r(x_0)} |\nabla w|^n dx \leq 1
\end{subarray}} 
\int_B \exp\Big( \big[\alpha_n + \big(\frac 14 \big)^\alpha \big] |w|^{\frac{n}{n-1}} \Big) dx
= +\infty,
\end{align*}
where the last assertion comes from the sharpness of the classical Moser--Trudinger inequality \eqref{eq:MT}. A similar argument works for \eqref{eq:SuperMT2}. Back to the validity of our inequalities  \eqref{eq:SuperMT1} and \eqref{eq:SuperMT2}, by comparing to existing results in a similar superciritical regime in the literature, see e.g. \cite{CR2015, CR2015-NA}, one would speculate that these inequalities can be true.

Before going further, we note that going beyond the thresholds $\alpha_n$ and $n/(n-1)$ in the classical Moser--Trudinger inequality is an extensive topic of research in the last two decades. A prior to the present work, a large number of works focus on the sharp constant $\alpha_n$ by perturbing it by a small number heavily depending on some norm of $u$. Among the works related to this direction, we can recall the following interesting inequality
\begin{equation}\label{eq:MT-Yang}
\sup\limits_{\begin{subarray}{c} 
u \in \Wn:
\int_B |\nabla u|^n dx = 1
\end{subarray}} 
\int_B \exp\big( \alpha_n [ 1+ \gamma \|u\|_n^n ]^\frac{1}{n-1} |u|^{\frac{n}{n-1}} \big) dx < +\infty
\end{equation}
for $0 \leq \gamma < \lambda_1 (B)$, proved by Adimurthi and Druet \cite{AD04} for $n=2$ and by Yang \cite{Yang06} for $n \geq 3$. Here $\| \cdot \|_n$ denotes the $L^n$-norm with respect to the Lebesgue measure and $\lambda_1 (B)$ is the first eigenvalue of the $n$-Laplacian with Dirichlet boundary condition in $B$. We note that \eqref{eq:MT-Yang} does not violate \eqref{eq:MT}, it does give more precise information than inequality \eqref{eq:MT} in the sense described below. Suppose that $(u_j)_j $ is a maximizing sequence for the left hand side of \eqref{eq:MT-Yang}. Since $\|\nabla u_j\|_n =1$, there is some $u_\infty \in \Wn$ such that $u_j \rightharpoonup u_\infty$ weakly in $\Wn$. If $u_\infty \not\equiv 0$, the obvious inequality
\[
1+ \gamma \|u_\infty\|_n^n \leq \frac 1{1-\|\nabla u_\infty\|_n^n}
\]
implies that \eqref{eq:MT} is a consequence of the well-known concentration-compactness principle of Lions; see \cite{Lions1985}. If $u_\infty \equiv 0$, the compact embedding $\Wn \hookrightarrow L_n(B)$ implies that $\|u_\infty\|_n = 0$, yielding that the parameter $\gamma$ has no effect. %For interested readers, we also refer to a remarkable result due to Wang and Ye \cite{WangYe}.

In addition to Theorem \ref{SuperMT} above, in the following result, we shall show that in our supercritical regime the threshold $\alpha_n$ is sharp.

\begin{lemma}\label{Sharpness}
Let $\alpha >0$ and $n \geq 2$. Then the threshold $\alpha_n$ in both \eqref{eq:SuperMT1} and \eqref{eq:SuperMT2} is sharp in the sense that if we replace $\alpha_n$ by any number $\gamma > \alpha_n$ then the supremum in both \eqref{eq:SuperMT1} and \eqref{eq:SuperMT2} becomes infinity, namely
\[
\sup\limits_{\begin{subarray}{c} 
u \in \Wnr:
\int_B |\nabla u|^n dx \leq 1
\end{subarray}} 
\int_B \exp\big( (\gamma + |x|^\al) |u|^{\frac{n}{n-1}} \big) dx = +\infty
\]
and
\[
\sup\limits_{\begin{subarray}{c} 
u\in \Wnr:
\int_B |\nabla u|^n dx \leq 1
\end{subarray}}
\int_B \exp\big( \gamma |u|^{\frac{n}{n-1} + |x|^\alpha} \big) dx = +\infty,
\]
whenever $\gamma > \alpha_n$.
\end{lemma}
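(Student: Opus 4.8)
The natural approach is to exhibit explicit maximizing sequences for which the relevant integrals blow up. For the classical Moser–Trudinger inequality \eqref{eq:MT}, sharpness of $\alpha_n$ is witnessed by the Moser sequence of "log-type" functions concentrating at the origin, namely (in radial coordinates $r=|x|$)
\[
m_j(x) = \omega_{n-1}^{-1/n}
\begin{cases}
(\log j)^{\frac{n-1}{n}} & \text{if } 0 \le r \le 1/j,\\[2mm]
\dfrac{\log(1/r)}{(\log j)^{1/n}} & \text{if } 1/j \le r \le 1,
\end{cases}
\]
which satisfy $\int_B |\nabla m_j|^n\,dx = 1$ and, for any $\gamma > \alpha_n$, $\int_B \exp(\gamma |m_j|^{n/(n-1)})\,dx \to +\infty$. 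Since these functions are radially symmetric, they belong to $\Wnr$, so the plan is simply to plug them into the two supercritical suprema and show the extra term $|x|^\alpha$ only helps.

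For \eqref{eq:SuperMT1} with $\gamma > \alpha_n$, I would write
\[
\int_B \exp\big( (\gamma + |x|^\al) |m_j|^{\frac{n}{n-1}} \big)\,dx
\ge \int_B \exp\big( \gamma\, |m_j|^{\frac{n}{n-1}} \big)\,dx,
\]
because $|x|^\alpha \ge 0$ and $|m_j| \ge 0$; the right-hand side already tends to $+\infty$ by the sharpness of \eqref{eq:MT}. That settles the first claim. For \eqref{eq:SuperMT2} the point is that the exponent is enlarged from $n/(n-1)$ to $n/(n-1) + |x|^\alpha$, and since the dominant contribution of $m_j$ comes from the region near the origin where $|m_j|$ is large, I want to show that enlarging the exponent there makes $|m_j|^{n/(n-1)+|x|^\alpha}$ no smaller than $|m_j|^{n/(n-1)}$. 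The cleanest way: restrict the integral to the small ball $B_{1/j}$, on which $m_j \equiv \omega_{n-1}^{-1/n}(\log j)^{(n-1)/n} \to +\infty$, hence $|m_j| \ge 1$ for $j$ large, and on that set $t \mapsto t^{n/(n-1)+|x|^\alpha}$ is increasing in the exponent, so $|m_j|^{n/(n-1)+|x|^\alpha} \ge |m_j|^{n/(n-1)}$. Thus
\[
\int_B \exp\big( \gamma\, |m_j|^{\frac{n}{n-1} + |x|^\alpha} \big)\,dx
\ge \int_{B_{1/j}} \exp\big( \gamma\, |m_j|^{\frac{n}{n-1}} \big)\,dx
= \Omega_n j^{-n} \exp\big( \gamma\, \omega_{n-1}^{-1/(n-1)} (\log j)\big),
\]
and since $\gamma \omega_{n-1}^{-1/(n-1)} = \gamma/\alpha_n \cdot n > n$ for $\gamma > \alpha_n$, the exponent $j^{\gamma/\alpha_n \cdot n - n} \to +\infty$ dominates the $j^{-n}$ factor; hence the supremum is $+\infty$.

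The only mild subtlety — and the place I would be careful — is the bookkeeping of the constant $\omega_{n-1}^{-1/(n-1)}$ versus $\alpha_n = n\,\omega_{n-1}^{1/(n-1)}$: on $B_{1/j}$ one has $|m_j|^{n/(n-1)} = \omega_{n-1}^{-1/(n-1)}\log j$, and the classical computation is exactly that $\alpha_n \cdot \omega_{n-1}^{-1/(n-1)} = n$, which is why at the threshold the two powers of $j$ cancel and for $\gamma > \alpha_n$ one gets genuine blow-up. Once this identity is recorded, both statements follow from the estimates above with no further work; no compactness, rearrangement, or delicate cut-off argument is needed beyond what already underlies the classical sharpness result.
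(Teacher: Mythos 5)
Your proof is correct and follows essentially the same route as the paper: both use the standard Moser sequence, restrict attention to $B_{1/j}$ where $u_j\geq 1$ for $j$ large so that raising the exponent only helps, and obtain blow-up from the identity $\alpha_n\,\omega_{n-1}^{-1/(n-1)}=n$. The only cosmetic difference is that for \eqref{eq:SuperMT1} the paper also restricts to $B_{1/j}$ and computes explicitly rather than invoking classical sharpness, but the substance is identical.
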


For simplicity and later uses, for each $\alpha >0$, let us simply denote
\begin{align*}
\MT^1(\al) &= \sup\limits_{\begin{subarray}{c} 
u\in \Wnr : 
\int_B |\nabla u|^n dx \leq 1
\end{subarray}} 
\int_B \exp\big( (\alpha_n + |x|^\al) |u|^{\frac{n}{n-1}} \big) dx,
\intertext{and}
\MT^2(\al) &=\sup\limits_{\begin{subarray}{c} 
u\in \Wnr : 
\int_B |\nabla u|^n dx \leq 1
\end{subarray}} 
\int_B \exp\big( \alpha_n |u|^{\frac{n}{n-1} + |x|^\alpha} \big) dx.
\end{align*}
Clearly, in view of Theorem \ref{SuperMT}, the sharp constants $\MT^i(\al)$ with $i=1,2$ are finite. As routine, now we turn out attention to the attainability of the sharp constants $\MT^i$ with $i=1,2$. Identifying optimal functions for functional inequalities is always a delicate issue. For example, although the supercritical Moser--Trudinger inequality \eqref{eq:MT-Yang} is valid for any $\gamma \in [0, \lambda_1 (B))$, optimal functions for \eqref{eq:MT-Yang} can only exist if $\gamma$ stays a way from $\lambda_1(B)$ except $n \geq 3$; see \cite{Yang06, MT2019}. In our supercritical regime, we are successful in proving that optimal functions exist in the full range of the parameter $\alpha$ regardless of the size of $n$. Our next result is the following.

\begin{theorem}\label{Attain}
Let $\alpha >0$ and $n \geq 2$. Then the sharp constants $\MT^i (\al)$ with $i=1,2$ are attained in $\Wnr$.
\end{theorem}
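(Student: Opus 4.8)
The plan is to use the direct method of the calculus of variations combined with a blow-up analysis à la Carleson--Chang and Moser. Fix $i \in \{1,2\}$ and let $(u_j)_j \subset \Wnr$ be a maximizing sequence for $\MT^i(\al)$, normalized so that $\int_B |\nabla u_j|^n\, dx = 1$ (one checks as usual that the constraint is saturated along any maximizing sequence, since scaling up $u$ only increases the integrand). By reflexivity of $\Wnr$ we may pass to a subsequence with $u_j \rightharpoonup u_\infty$ weakly in $\Wnr$, $u_j \to u_\infty$ strongly in $L^q(B)$ for every $q < \infty$ by the Rellich--Kondrachov theorem (and in fact pointwise a.e.\ after a further subsequence), and $\int_B |\nabla u_\infty|^n\, dx \leq 1$. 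The crux is to show $u_\infty \not\equiv 0$; granting this, one argues that the weak limit is in fact the optimizer. Indeed, if $u_\infty \not\equiv 0$ then by the Br\'ezis--Lieb lemma $\int_B |\nabla(u_j - u_\infty)|^n\, dx \to 1 - \int_B |\nabla u_\infty|^n\, dx =: 1 - \tau$ with $\tau \in (0,1]$, and one uses the Lions-type concentration-compactness improvement of the subcritical Moser--Trudinger inequality --- the integrands $\exp\big((\alpha_n + |x|^\al)|u_j|^{n/(n-1)}\big)$ (resp.\ the second form) are then equi-integrable on $B$, so one may pass to the limit under the integral and conclude $\int_B \exp\big((\alpha_n + |x|^\al)|u_\infty|^{n/(n-1)}\big)\,dx = \MT^i(\al)$, with the normalization constraint recovered a posteriori because a strict deficit $\int_B|\nabla u_\infty|^n\,dx < 1$ would allow rescaling to beat the supremum.

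The heart of the matter, and the main obstacle, is ruling out vanishing, i.e.\ showing one cannot have $u_\infty \equiv 0$. Suppose for contradiction $u_\infty \equiv 0$, so $\int_B|\nabla(u_j - u_\infty)|^n = 1$ for all $j$ and $u_j \to 0$ in $L^q$ for all $q$. Since we work with radial functions, after passing to a subsequence the sequence $e_j := |\nabla u_j|^n\, dx$ concentrates as a measure; radial symmetry forces the only possible concentration point to be the origin $x=0$. This is precisely where the supercritical perturbation is powerless: near $x = 0$ we have $|x|^\al \to 0$ (resp.\ the exponent $n/(n-1) + |x|^\al \to n/(n-1)$), so the extra term contributes nothing to leading order, and on the complementary region $\{|x| \geq \delta\}$ the functions $u_j$ are uniformly bounded (by the radial lemma / Strauss-type estimate, $|u_j(x)| \leq C|x|^{-(n-1)/n}\|\nabla u_j\|_n$) so that integral stays bounded. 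Hence if concentration occurred at the origin, one would obtain $\MT^i(\al) = \lim_j \int_B \exp(\cdots) \leq \Omega_n + \MT$-type bound, more precisely the Carleson--Chang asymptotic $\MT^i(\al) \leq \Omega_n(1 + e^{1 + 1/2 + \cdots + 1/(n-1)})$, the same upper bound as for the unperturbed functional on the ball. The contradiction is then reached by constructing an explicit test function --- a Moser-type truncated logarithm $M_\varepsilon$ concentrated at a point $x_\varepsilon$ with $|x_\varepsilon|$ fixed (say $|x_\varepsilon| = 1/2$), scaled so that $\int_B |\nabla M_\varepsilon|^n = 1$ --- for which $\int_B \exp\big((\alpha_n + |x|^\al)|M_\varepsilon|^{n/(n-1)}\big)\,dx$ strictly exceeds the Carleson--Chang threshold; here the point is that at $x_\varepsilon$ the factor is $\alpha_n + (1/2)^\al > \alpha_n$, so the concentration profile picks up an extra multiplicative gain $e^{((1/2)^\al/\alpha_n)\log(1/\varepsilon) \cdot (\text{main term})}$ that diverges, or at least strictly beats $\Omega_n(1 + e^{H_{n-1}})$ for $\varepsilon$ small. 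For the second functional one uses that $|M_\varepsilon(x_\varepsilon)|^{|x_\varepsilon|^\al} \to \infty$ along the concentration, producing the analogous strict gain.

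Assembling the pieces: choosing the test functions above shows $\MT^i(\al) > \Omega_n(1 + e^{H_{n-1}})$ where $H_{n-1} = 1 + 1/2 + \cdots + 1/(n-1)$; on the other hand, if the maximizing sequence vanished, the Carleson--Chang-type blow-up analysis forces $\MT^i(\al) \leq \Omega_n(1 + e^{H_{n-1}})$, a contradiction. Therefore $u_\infty \not\equiv 0$, and by the concentration-compactness argument of the first paragraph $u_\infty$ is the desired optimizer in $\Wnr$. I expect the delicate point to be the precise test-function computation establishing the strict inequality $\MT^i(\al) > \Omega_n(1 + e^{H_{n-1}})$ --- one must track the lower-order terms in the Moser expansion carefully to see that the gain from the factor $(1/2)^\al$ (resp.\ the exponent perturbation) survives, and the two inequalities in \eqref{eq:SuperMT1} and \eqref{eq:SuperMT2} require slightly different bookkeeping --- together with verifying the equi-integrability / pass-to-the-limit step, which is where Lemma \ref{Sharpness} and the structure of the subcritical inequality enter.
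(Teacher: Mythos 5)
Your high-level framework---direct method, rule out vanishing, equi-integrability via concentration-compactness, pass to the limit by Vitali---matches the paper's strategy, and you correctly identify the strict lower bound $\MT^i(\al) > J$ as the heart of the matter. However, the test-function construction you propose to establish that strict lower bound cannot work, and this is a genuine gap.

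You suggest concentrating a Moser-type truncated logarithm at a point $x_\varepsilon$ with $|x_\varepsilon| = 1/2$ fixed, hoping to exploit the factor $\alpha_n + (1/2)^\al > \alpha_n$ (resp.\ the exponent $\frac{n}{n-1} + (1/2)^\al$). But such a function is not radially symmetric about the origin and hence does \emph{not} belong to $\Wnr$---you cannot use it to test the supremum. The paper's introduction shows that this is exactly the mechanism by which the supercritical inequalities \eqref{eq:SuperMT1}--\eqref{eq:SuperMT2} blow up on the full space $\Wn$: a Moser bubble planted in $B_{1/4}(x_0)$ with $|x_0|=1/2$ sees the factor $\alpha_n + (1/4)^\al$ and the supremum is infinite. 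Restricting to $\Wnr$ is precisely what forbids this: the radial pointwise bound \eqref{eq:growthu} makes any $u \in \Wnr$ with $\|\nabla u\|_n \le 1$ uniformly bounded on $\{|x| \ge \delta\}$, so a radial sequence with unbounded sup norm can only concentrate at the origin, where $|x|^\al \to 0$ and the perturbation is inert. Your own paragraph about the concentration being ``powerless'' at the origin already contains the reason your test functions cannot exist.

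The strict lower bound must therefore come from a different source. For $\MT^1(\al)$ the paper's Lemma \ref{Strict} is completely elementary: the unperturbed $\MT$ is attained by a radial extremizer $u$ (Carleson--Chang, Lin), and plugging that same $u$ into the perturbed functional gives $\MT^1(\al) > \MT \ge J$ because $|x|^\al |u|^{n/(n-1)}>0$ on a set of positive measure. For $\MT^2(\al)$ (Lemma \ref{Strict*}) the paper uses the \emph{origin}-concentrated Moser function $u_\varepsilon$; the gain over $J$ comes not from shifting the concentration center but from the tail region $R\varepsilon < |x| < a$, where $u_\varepsilon$ is uniformly $O(1)$ and the extra exponent $|x|^\al > 0$ produces a strictly positive lower-order contribution of size $\sim c^{-n/(n-1) - a^\al}$, which absorbs the $O(R^{-n/(n-1)})$ error provided $a$ is chosen so small that $\frac{n}{n-1} + a^\al < \big(\frac{n}{n-1}\big)^2$. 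This is the delicate bookkeeping you flagged, but it must be carried out at the origin, not at $|x|=1/2$.

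A secondary gap: your vanishing-exclusion for $\MT^2(\al)$ tacitly assumes that vanishing implies the Dirichlet energy concentrates at the origin. The paper must split into two cases according to whether $(u_j)_j$ is a normalized concentrating sequence. If it is, your sketch gives $\MT^2(\al) \le J$, contradiction. If it is not (energy does not all collapse to the origin), the paper applies Theorem \ref{SuperMT} to a radially cut-off sequence $\varphi_a u_j$ whose Dirichlet energy stays bounded away from $1$, obtains $L^p$ control for some $p>1$, and concludes via Vitali that $\MT^2(\al) = |B|$---again a contradiction since $J > |B|$. Vanishing does not by itself entail concentration, so this case distinction is needed and is missing from your outline.
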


Let us now briefly comment of the idea of the proof of Theorem \ref{Attain}. To obtain the attainability of the constants $\MT^i (\al)$, $i=1,2$, it is routine to examine the maximizing sequence for $\MT^i (\al)$, $i=1,2$. Two ingredients in our proof is a concentration-compactness principle of Lions type established in Lemma \ref{CC} and a lower bound for $\MT^i (\al)$ established in Lemmas \ref{Strict} and \ref{Strict*}.

Finally, we are interested in applications of our supercrititcal inequalities \eqref{eq:SuperMT1} and \eqref{eq:SuperMT2}. Among many applications of the classical Moser--Trudinger inequality, we choose the following typical problem
\begin{equation}\label{eq:Super}
\left\{
\begin{aligned}
-\Delta_n u &= f(x,u) && \mbox{ in } B,\\
u&> 0 &&\mbox{ in } B,\\
 u &= 0 && \mbox{ on } \pa B,
\end{aligned}
\right.
\end{equation}
where the nonlinearity $f$ has the maximal growth on $u$. To relate the maximal growth on $u$ and our new supercritical Moser--Trudinger inequalities, we propose the following new terminology: The function $f$ is said to have the \textit{critical growth} $\alpha_0$ on $B$ if for any $\beta>\alpha_0$ there holds
\begin{equation}\label{CG>}
\lim_{|t| \to +\infty} \frac{|f(x,t)|}{\exp (\beta |t|^{\frac{n}{n-1} + |x|^\alpha})} = 0
\end{equation}
uniformly on $x \in B$ and for any $\beta < \alpha_0$ there holds
\begin{equation}\label{CG<}
\lim_{|t| \to +\infty} \frac{|f(x,t)|}{\exp (\beta |t|^{\frac{n}{n-1} + |x|^\alpha})} = +\infty
\end{equation}
uniformly on $x \in B$. Clearly, the usual critical growth used in many works prior to this work differs from ours by the exponent $|x|^\alpha$. Furthermore, in view of \eqref{CG>} and \eqref{CG<}, one can define a similar critical growth by replacing the exponent $\beta |t|^{\frac{n}{n-1} + |x|^\alpha}$ by the exponent $(\beta + |x|^\alpha ) |t|^{\frac{n}{n-1} }$. However, we do not treat this case in the present paper and leave it for interested readers. 

Inspired by many works, for instance, \cite{doO1996,FMR,FOR02}, we are going to impose the following conditions on the nonlinearity $f$:
\begin{enumerate}
\item[($F_1$)] $f : \overline B \times \R \to \R$ is continuous and \textit{radially symmetric} in the first variable, namely
\[
f(x,t)=f(y,t)
\]
whenever $|x|=|y|$.

\item[($F_2$)] There exist $R>0$ and $M>0$ such that
\[
0 < F(x,t) = \int_0^t f(x,s)ds \leq M f(x,t)
\]
for any $t \geq R$ and any $x \in B$.

\item[($F_3$)] There hold $f(x,t) \geq 0$ for any $(x,t) \in B \times \R$ and $f(x,0) = 0$ for any $x \in B$.

\item[($F_4$)] There holds
\[
\limsup_{t \searrow 0} \frac{nF(x,t)}{t^n} < \lambda_1 (B)
\]
uniformly on $B$.

\item[($F_5$)] There holds
\[
\lim_{t \nearrow +\infty} \frac{t f(x,t) }{\exp (\alpha_0 t^{\frac{n}{n-1} })} \geq \beta_0 >  \frac{n^n}{\al_0^{n-1} e^{1+ \cdots + \frac1{n-1}}}
\]
uniformly on $B$. %where
%\[
%\mathscr M = \lim_{j \to +\infty} j\int_0^1 e^{j(t^{n/(n-1)}-t)}dt.
% \]
\end{enumerate}

Apparently, a nonlinearity $f$ having critical growth in the sense of \eqref{CG>} and \eqref{CG<} behaves like $\exp (\alpha_0 |t|^{\frac n{n-1} + |x|^\alpha})$, which decays to infinity no slower than $\exp (\alpha_0 |t|^{\frac n{n-1}})$. Therefore, our condition ($F_5$) is somehow weaker than the following condition:
\begin{enumerate}
\item[($F'_5$)] There holds
\[
\lim_{t \nearrow +\infty} \frac{t f(x,t) }{\exp (\alpha_0 t^{\frac{n}{n-1} + |x|^\alpha})} \geq \beta_0 > \frac{n^n}{\al_0^{n-1} e^{1+ \cdots + \frac1{n-1}}}
\]
uniformly on $B$.
\end{enumerate}
We also take this chance to notice that our lower bound for $\beta_0$ in $(F_5)$ is also weaker than the existing hypothesis; see \cite[condition $(F_5)$]{doO1996}. Given $\alpha>0$ and consider the following function
\[
f(x,t) = t^{\frac{1}{n-1} + |x|^\alpha}
\left( 
\begin{aligned}
& \exp \big( \alpha_0 t^{\frac n{n-1} + |x|^{\alpha}} \big) - \sum_{i=0}^{n-3} \frac{\alpha_0^i}{i!} t^{(\frac n{n-1} + |x|^\alpha)i} \\
& - c \frac{\al_0^{n-2}}{(n-2)!} t^{(\frac n{n-1} + |x|^\alpha)(n-2)}
\end{aligned}
\right).
\]
Clearly, the above function $f$ with $c$ closed to $1$ satisfies all five conditions $(F_1)$--$(F_5)$.

Now we state our existence result for problem \eqref{eq:Super}.

\begin{theorem}\label{thmApplication}
Let $n \geq 2$. Assume that $f$ has critical growth $\alpha_0$ on $B$ and satisfies the five conditions ($F_1$)--($F_5$). Then there exists a positive $C^1$-solution to the supercritical problem \eqref{eq:Super}.
\end{theorem}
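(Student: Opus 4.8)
\textbf{Proof proposal for Theorem \ref{thmApplication}.}
The plan is to obtain the solution variationally by applying the mountain pass theorem to the energy functional associated with \eqref{eq:Super}. Working in the radial space $\Wnr$ (justified because $f$ is radially symmetric by $(F_1)$, so by the principle of symmetric criticality a radial critical point is a genuine weak solution), I would set
\[
J(u) = \frac1n \int_B |\nabla u|^n dx - \int_B F(x, u^+) dx,
\]
where $u^+ = \max\{u,0\}$; this truncation, together with $(F_3)$, will force any nontrivial critical point to be nonnegative, and then the strong maximum principle for the $n$-Laplacian upgrades it to a positive solution. The first step is to verify that $J$ is well defined and $C^1$ on $\Wnr$: here the critical growth hypotheses \eqref{CG>}--\eqref{CG<} give, for any $\beta > \alpha_0$, a bound $|F(x,t)| \le C_\beta \exp(\beta |t|^{n/(n-1) + |x|^\alpha})$, and the supercritical Moser--Trudinger inequality \eqref{eq:SuperMT2} of Theorem \ref{SuperMT} (after rescaling $u$ so that $\beta \|\nabla u\|_n^{n/(n-1)} \le \alpha_n$) guarantees this is integrable; differentiability follows similarly.

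Next I would check the mountain pass geometry. The local condition near the origin comes from $(F_4)$: choosing $\mu < \lambda_1(B)$ with $nF(x,t) \le \mu t^n + \varepsilon$-type control for small $t$ and handling large $t$ by the exponential bound above (absorbing it with a high power of $\|u\|$), one gets $J(u) \ge \rho_0 > 0$ on a small sphere $\|\nabla u\|_n = \rho$. For the far end, condition $(F_2)$ is the standard Ambrosetti--Rabinowitz-type assumption: it yields $F(x,t) \ge c_1 t^{1/M} - c_2$ for $t \ge R$, so $J(t\varphi_0) \to -\infty$ along any fixed nonnegative $\varphi_0$, producing an element $e$ with $J(e) < 0$. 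Thus a Palais--Smale sequence $(u_j)$ at the minimax level $c$ exists, and $(F_2)$ gives the usual bound showing $(u_j)$ is bounded in $\Wnr$; passing to a subsequence, $u_j \rightharpoonup u$ weakly, $u_j \to u$ in $L^q$ for all finite $q$, and a.e.

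The crucial and most delicate step — and the one I expect to be the main obstacle — is to rule out concentration, i.e.\ to show the weak limit $u$ is itself the desired nontrivial solution rather than having the $L^n$-normalized gradients concentrate at a point, which is exactly the failure mode that the classical Moser--Trudinger sharp exponent detects. The standard remedy, going back to de Figueiredo--Miyagaki--Ruf and adapted in \cite{doO1996}, is a quantitative estimate on the minimax level: using a Moser-type test function sequence together with $(F_5)$ one shows $c < \frac1n$ in suitable normalized units (more precisely $c$ lies strictly below the threshold $\alpha_n^{n-1}/(n \cdot \text{const})$ dictated by \eqref{eq:SuperMT2}), and the explicit lower bound on $\beta_0$ in $(F_5)$ is calibrated precisely so that this strict inequality holds. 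Granted this level estimate, one argues that if $\|\nabla u_j\|_n \to 1$ (the concentration scenario) then along $(u_j)$ the exponential nonlinearity stays in the subcritical range of \eqref{eq:SuperMT2}, so $\int_B f(x,u_j^+) u_j \, dx \to \int_B f(x,u^+) u \, dx$ and $\int_B F(x,u_j^+) dx \to \int_B F(x,u^+)dx$ by a Vitali/generalized dominated convergence argument; combined with $J'(u_j) \to 0$ this forces $u \not\equiv 0$ and, testing $J'(u) = 0$, $u_j \to u$ strongly, whence $J(u) = c > 0$ and $u$ solves \eqref{eq:Super}. Finally, elliptic regularity for the $n$-Laplacian (the right-hand side $f(x,u)$ being continuous and, by the argument above, in every $L^q$) gives $u \in C^1$, and $(F_3)$ plus the Harnack/strong maximum principle give $u > 0$ in $B$, completing the proof.
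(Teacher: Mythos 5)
Your proposal follows the same overall route as the paper: the mountain pass theorem on $\Wnr$ for $I(u)=\frac1n\int_B|\nabla u|^n\,dx-\int_B F(x,u_+)\,dx$, growth control via the supercritical inequality \eqref{eq:SuperMT2}, a minimax level estimate via Moser test functions calibrated to $(F_5)$, a de Figueiredo--Miyagaki--Ruf $L^1$-convergence lemma for the Palais--Smale sequence, and finally regularity plus a maximum principle for positivity. One concrete slip: from $(F_2)$ you claim $F(x,t)\ge c_1 t^{1/M}-c_2$ for $t\ge R$, but $(F_2)$ gives $f/F\ge 1/M$, i.e.\ $(\log F)'\ge 1/M$, so the correct consequence is the \emph{exponential} lower bound $F(x,t)\ge Ce^{t/M}$; the polynomial bound you wrote would not suffice to dominate $t^n/n$ when $1/M\le n$, whereas the exponential one always does. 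Also, the paper establishes positivity not by a generic Harnack argument but by exploiting radial symmetry to integrate the ODE $-\bigl(r^{n-1}|u_\infty'|^{n-2}u_\infty'\bigr)'=r^{n-1}g(r)$, concluding directly that $u_\infty'<0$ on some $(r_0,1)$ and hence $u_\infty>0$ on $(0,1)$; your appeal to the strong maximum principle for the $n$-Laplacian (V\'azquez) is an acceptable alternative but is less self-contained in this radial setting.
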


As can be easily notified, compared with other existing hypotheses on similar problems, there is an extra hypothesis on $f$, namely, $f$ is assumed to be radial on the first variable. We would like to comment that somehow this extra requirement is natural. Since we are in the superciritical case, it is expected to look for solutions in the space $\Wnr$. In this sense, because
\[
\Delta_n f = r^{1-n} (r^{n-1}|f'|^{n-2} f')'
\]
 for any radially symmetric function $f$, we know that $n$-Laplacian of a radially symmetric $C^2$-function is again radially symmetric. As such, if the problem \eqref{eq:Super} has a radially symmetric solution, the nonlinearity $f$ must be radially symmetric. In addition to the above fact, it is worth emphasizing that due to the radial symmetry of $f$ in the first variable, we are able to prove that the solution found in $\Wnr$ is strictly positive in $B$.

The present paper is organized as follows.

\tableofcontents

%%%%%%%%
%%%%%%%%
%%%%%%%%
%%%%%%%%

\section{Two supercritical Moser--Trudinger inequalities: Proof of Theorem \ref{SuperMT} and Lemma \ref{Sharpness}}

This section is devoted to a proof of Theorem \ref{SuperMT}. In fact, we shall prove a more general result which covers Theorem \ref{SuperMT} as a special case. The following is our main result of this section.

\begin{theorem}\label{SuperMT*}
Let $f:[0,1) \to [0,\infty)$ be a continuous function such that 
\begin{enumerate}
\item[($f_1$)] $f(0) = 0$ and $f(r) >0$ for $r >0$;
\item[($f_2$)] there exists some $c > 0$ such that
\[
f(r) \leq \frac c{-\log r}
\] 
for $r$ near $0$.
\item[($f_3$)] there exists some $\gamma \in (0,1)$ such that
\[
f(r) \leq \gamma \frac{\alpha_n}{n} \frac{\log (1-r)}{\log r}
\] 
for $r$ near $1$.
\end{enumerate}
Then, there holds
\begin{equation}\label{eq:SuperMT1*}
\sup\limits_{\begin{subarray}{c} 
u\in \Wnr : 
 \int_B |\nabla u|^n dx \leq 1
\end{subarray}} 
 \int_B \exp \big( (\alpha_n + f(|x|)) |u|^{\frac{n}{n-1}} \big) dx < +\infty.
\end{equation}
Moreover, if the condition ($f_2$) is replaced by the following condition ($f_2'$),
\begin{enumerate}
\item[($f_2'$)] there exists some $c > 0$ and some $\gamma>2$ such that
\[
f(r) \leq \frac c{(-\log r)^{\gamma}}
\] 
for $r$ near $0$,
\end{enumerate}
then there holds
\begin{equation}\label{eq:SuperMT2*}
\sup\limits_{\begin{subarray}{c} 
u\in \Wnr : 
 \int_B |\nabla u|^n dx \leq 1
\end{subarray}} 
 \int_B \exp \big( \alpha_n |u|^{\frac{n}{n-1} + f(|x|)} \big) dx < +\infty.
\end{equation}
\end{theorem}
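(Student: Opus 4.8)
The plan is to reduce everything to a one-dimensional estimate via the standard change of variables $t = -\log r$ (equivalently $r = e^{-t}$), which is the workhorse of all radial Moser--Trudinger arguments. Given a radial $u \in \Wnr$ with $\int_B |\nabla u|^n\,dx \le 1$, write $u(x) = \psi(-\log|x|)$, so that after passing to the variable $t \in [0,\infty)$ the gradient constraint becomes $\omega_{n-1}^{-1}\int_0^\infty |\psi'(t)|^n\,dt \le 1$, i.e. $\int_0^\infty |\phi'(t)|^n\,dt \le 1$ for $\phi = \omega_{n-1}^{1/n}\psi$; one may further assume $\phi$ is nonnegative, nondecreasing, with $\phi(0) = 0$. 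The two integrals to be bounded become, up to the constant $\omega_{n-1}$,
\[
\int_0^\infty \exp\!\Big( \alpha_n \big(1 + \alpha_n^{-1} f(e^{-t})\big)\, \omega_{n-1}^{-\frac{1}{n-1}} \phi(t)^{\frac{n}{n-1}} - n t \Big)\,dt
\]
for \eqref{eq:SuperMT1*}, and the analogous expression with $\phi(t)^{n/(n-1)}$ replaced by $\phi(t)^{n/(n-1) + f(e^{-t})}$ for \eqref{eq:SuperMT2*}. Recalling $\alpha_n = n\,\omega_{n-1}^{1/(n-1)}$, the critical exponential in Moser's original estimate is exactly $\exp(n \phi(t)^{n/(n-1)} - nt)$, which is integrable on $[0,\infty)$ uniformly over admissible $\phi$ by Moser's lemma; so the whole game is to control the \emph{excess} produced by the perturbation $f$, near $t = \infty$ (i.e. $r \to 0$) and near $t = 0$ (i.e. $r \to 1$), and to show that away from these two regimes $f$ is bounded and contributes only a harmless multiplicative constant.

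The key step is the pointwise bound $\phi(t) \le t^{(n-1)/n}$, a direct consequence of the normalized energy constraint and $\phi(0)=0$ via Hölder's inequality: $\phi(t) = \int_0^t \phi'(s)\,ds \le t^{1-1/n}\big(\int_0^t |\phi'|^n\big)^{1/n} \le t^{(n-1)/n}$. Hence $\omega_{n-1}^{-1/(n-1)}\phi(t)^{n/(n-1)} \le \omega_{n-1}^{-1/(n-1)} t$, so $\alpha_n \cdot \omega_{n-1}^{-1/(n-1)}\phi(t)^{n/(n-1)} \le nt$. For \eqref{eq:SuperMT1*} the extra term in the exponent is $f(e^{-t})\,\omega_{n-1}^{-1/(n-1)}\phi(t)^{n/(n-1)} \le f(e^{-t})\,\omega_{n-1}^{-1/(n-1)}\, t = \tfrac{n}{\alpha_n} f(e^{-t}) \cdot t$. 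Near $r = 0$, hypothesis ($f_2$) gives $f(e^{-t}) \le c/t$, so this excess is $\le nc/\alpha_n$, a fixed constant — the borderline decay rate $1/(-\log r)$ is precisely what is needed. Near $r = 1$, i.e. $t \to 0$, hypothesis ($f_3$) says $f(e^{-t}) \le \gamma \tfrac{\alpha_n}{n}\,\tfrac{\log(1-e^{-t})}{-t}$; since $\log(1-e^{-t}) = \log t + o(1)$ as $t \to 0$, the excess $\tfrac{n}{\alpha_n}f(e^{-t})\,t \le \gamma\,\log(1-e^{-t}) \le \gamma(\log t + C)$ for small $t$, which is very negative and only helps — but one must be slightly careful because near $t = 0$ the bound $\phi(t)^{n/(n-1)} \le t$ may be far from sharp and one wants a two-sided control; using ($f_3$) one gets $f(e^{-t})\phi(t)^{n/(n-1)}\omega_{n-1}^{-1/(n-1)} \le \gamma \log(1-e^{-t})$, so the factor $\exp(\alpha_n f(e^{-t})\,\cdots)$ is bounded by $(1-e^{-t})^\gamma$, which is bounded near $t = 0$. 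Assembling the pieces: split $[0,\infty) = [0,\delta] \cup [\delta, T] \cup [T,\infty)$ with $\delta$ small and $T$ large so that ($f_2$), ($f_3$) apply on the outer pieces and $f$ is bounded by some $M$ on $[\delta,T]$; on each piece the integrand is bounded by a constant times $\exp(n\phi^{n/(n-1)} - nt)$ (with $n$ possibly replaced by $n + \varepsilon$ on the compact middle piece, still controlled by Moser's lemma applied on a bounded interval — or even more simply by $\exp(Mt - nt)$ if we keep the bare estimate), and integrating gives a uniform finite bound. This proves \eqref{eq:SuperMT1*}.

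For \eqref{eq:SuperMT2*} the perturbation sits in the exponent of $\phi$, so the relevant quantity is $\phi(t)^{n/(n-1) + f(e^{-t})} = \phi(t)^{n/(n-1)}\cdot \phi(t)^{f(e^{-t})}$, and using $\phi(t) \le t^{(n-1)/n} \le \max(1, t^{(n-1)/n})$ we get $\phi(t)^{f(e^{-t})} \le \max(1, t)^{f(e^{-t})} \le \exp\big(f(e^{-t})\log^+ t\big)$. The excess in the exponent of the integrand is then at most $\alpha_n \omega_{n-1}^{-1/(n-1)} t \cdot \big(\exp(f(e^{-t})\log^+ t) - 1\big) = nt\,\big(\exp(f(e^{-t})\log^+ t) - 1\big)$, and for this to stay bounded as $t \to \infty$ we need $t \cdot f(e^{-t})\log t \to 0$, i.e. $f(e^{-t}) = o\big(1/(t\log t)\big)$ — which is exactly what ($f_2'$) with $\gamma > 2$ delivers, since $f(e^{-t}) \le c/t^\gamma = o(1/(t\log t))$ (in fact $\gamma > 1$ would already give $t f(e^{-t})\log t \to 0$, but $\gamma > 2$ is a safe margin allowing for the multiplicative correction $e^{x} - 1 \le 2x$ for small $x$ and the bookkeeping below; indeed with $\gamma>2$ one has $t f(e^{-t})\log t \le c\,t^{1-\gamma}\log t \to 0$). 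Near $t = 0$ one has $\log^+ t = 0$ once $t \le 1$, so $\phi(t)^{f(e^{-t})} \le 1$ (using $\phi(t)\le 1$ there) and no correction at all appears — condition ($f_3$) is not even needed near $r=1$ for this second inequality in the form stated, though it is harmless to keep it. Then the same three-region splitting as before yields the uniform bound. The main obstacle, and the only genuinely delicate point, is making the two-sided control of $\phi(t)^{n/(n-1)}$ near $t = 0$ rigorous so that the contribution of $f$ there is provably bounded (rather than merely ``morally'' negligible); once the bounds $\phi(t)\le t^{(n-1)/n}$ and $\phi(t)\le \min(1, t^{(n-1)/n})$ are in hand and combined with the precise asymptotics $\log(1-e^{-t}) \sim \log t$, this is routine, and the rest is Moser's classical lemma applied piecewise.
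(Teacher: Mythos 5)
Your overall framework for Theorem \ref{SuperMT*} (pass to $t=-\log r$, use the pointwise bound $\phi(t)\le t^{(n-1)/n}$, split the line into three pieces) is the same as the paper's, and your treatment of \eqref{eq:SuperMT2*} is sound --- indeed sharper. You observe that under ($f_2'$) the excess exponent $\alpha_n|u|^{\frac{n}{n-1}}\big(|u|^{f}-1\big)\le nt\big((\tfrac{n}{\alpha_n}t)^{\frac{n-1}{n}f(e^{-t})}-1\big)$ tends to $0$ uniformly as $t\to\infty$ as soon as $\gamma>1$, so the excess factor is a bounded multiplicative constant and the bound reduces directly to $\MT$. The paper instead uses the crude bound $\exp(\alpha_n|u|^{n/(n-1)})r^{n-1}\le r^{-1}$ on the base term and then needs the integral $\int t^{1-\gamma}\log t\,dt$ to converge, which is why it requires $\gamma>2$. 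You also correctly note that ($f_3$) is not used for \eqref{eq:SuperMT2*}.

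Your argument for \eqref{eq:SuperMT1*}, however, has a genuine sign error near $r=1$. From $f(e^{-t})\le\gamma\tfrac{\alpha_n}{n}\tfrac{\log(1-e^{-t})}{-t}$ and $|u|^{n/(n-1)}\le\tfrac{n}{\alpha_n}t$, the excess is
\[
f(e^{-t})\,|u|^{\frac{n}{n-1}}\ \le\ \gamma\,\frac{\log(1-e^{-t})}{-t}\cdot t\ =\ -\gamma\log(1-e^{-t}),
\]
which is \emph{positive} and tends to $+\infty$ as $t\to 0$. You drop the minus sign, write the bound as $\gamma\log(1-e^{-t})$, call it ``very negative,'' and conclude that the excess factor $\exp(f|u|^{n/(n-1)})$ is $\le (1-e^{-t})^{\gamma}$, hence bounded near $t=0$. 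The correct factor is $(1-e^{-t})^{-\gamma}=(1-r)^{-\gamma}$, which blows up as $r\to 1$. And the worst case is attained: for each fixed $r$ close to $1$ the function $u(s)=(\omega_{n-1}(-\log r))^{-1/n}(-\log s)$ on $[r,1]$ (extended by a constant inward) saturates \eqref{eq:growthu} at $s=r$, so the excess genuinely diverges. Your three-region assembly therefore fails on $[0,\delta]$: the integrand there is \emph{not} dominated by a constant times $\exp(n\phi^{n/(n-1)}-nt)$. Tellingly, your argument never invokes the hypothesis $\gamma\in(0,1)$ in ($f_3$), which must be used. The repair (the paper's route) is to accept the divergent excess $(1-r)^{-\gamma}$ and pair it with the crude base bound $\exp(\alpha_n|u|^{n/(n-1)})\,r^{n-1}\le r^{-1}$, so the integrand on $(\rho,1)$ is $\le r^{-1}(1-r)^{-\gamma}$, and $\int_\rho^1 r^{-1}(1-r)^{-\gamma}\,dr<\infty$ precisely because $\gamma<1$.
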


\begin{proof}
Let $u \in \Wnr$ with $\|\nabla u\|_{L^n(B)} \leq 1$. Since $u$ is radially symmetric about the origin, we should emphasize that there is no difference if we write $u(|x|)$ instead of $u(x)$. By density, we may assume that $u \in \Cr$, which immediately yieds
\[
\int_0^1 |u'(s)|^n s^{n-1}ds \leq \om_{n-1}^{-1}. 
\]
Clearly, we always have
\[
u(r) = -\int_r^1 u'(s) ds = -\int_r^1 u'(s) s^{\frac {n-1}n} s^{-\frac {n-1}n} ds.
\]
By H\"older's inequality, we get the following pointwise estimate
\begin{align}\label{eq:growthu}
|u(r)| &\leq \Big(\int_r^1 |u'(s)|^n s^{n-1}ds\Big)^{1/n} \Big(\int_r^1 s^{-1} ds\Big)^{\frac{n-1}n}\notag\\
&\leq \om_{n-1}^{-1/n} \Big(\om_{n-1}\int_0^1 |u'(s)|^n s^{n-1}ds\Big)^{1/n} \big(-\log r\big)^{\frac{n-1}n}\notag\\
& \leq \Big(\frac{n}{\alpha_n}\Big)^{\frac{n-1}n} \big(-\log r\big)^{\frac{n-1}n}
\end{align}
for any $r>0$. Recall in \eqref{eq:growthu} that $\alpha_n = n\omega_{n-1}^{1/(n-1)}$. Now \underline{we prove \eqref{eq:SuperMT1*}} under the three conditions ($f_1$), ($f_2$), and ($f_3$). We note by \eqref{eq:growthu} that
\[
f(r) |u(r)|^{\frac n{n-1}} \leq \frac{n}{\alpha_n} (-\log r) f(r)=: g(r)
\]
for any $r> 0$. In view of ($f_3$), there is some $\rho \in (0,1)$ such that
\[
f(r) \leq \gamma \frac{\alpha_n}{n} \frac{\log (1-r)}{\log r}
\] 
for $r\in (\rho,1)$. Then, we have
\begin{align*}
\int_B \exp \big( [\alpha_n + f(|x|)] |u|^{\frac n{n-1}} \big) dx &=\om_{n-1} \int_0^\rho \exp \big( [\alpha_n + f(r)] |u|^{\frac{n}{n-1}} \big) r^{n-1} dr\\
&\quad + \om_{n-1} \int_\rho^1 \exp \big( [\alpha_n + f(r)] |u|^{\frac{n}{n-1}} \big) r^{n-1} dr\\
& = I + II.
\end{align*}
We have by \eqref{eq:growthu} and the estimate for $f$ on $(\rho,1)$ the following estimate for $II$
\begin{align*}
II \leq \om_{n-1} \int_\rho^1 r^{-1} (1-r)^{-\gamma} dr \leq \frac1\ga \frac{(1-\rho)^\ga}{1-\ga}.
\end{align*}
By the condition ($f_2$) on $f$, we see that $g$ is continuous on $(0,1)$ and bounded near $0$. This allows us to set
\[
C_\rho = \sup_{r\in (0,\rho)} g(r),
\]
which is finite. Hence, the term $I$ is easily estimated as follows
\begin{align*}
I & \leq e^{C_\rho} \om_{n-1} \int_0^\rho \exp \big( \alpha_n |u|^{\frac n{n-1}} \big) r^{n-1} dr\\
& \leq e^{C_\rho} \int_B \exp \big( \alpha_n |u|^{\frac n{n-1}} \big) dx
 \leq e^{C_\rho} \MT.
\end{align*}
Putting these estimates together, we arrive at
\[
\int_B \exp \big( [\alpha_n + f(|x|)] |u|^{\frac n{n-1}} \big) dx \leq \frac1\ga \frac{(1-\rho)^\ga}{1-\ga} + e^{C_\rho} \MT.
\]
This proves \eqref{eq:SuperMT1*}. Finally, \underline{we prove \eqref{eq:SuperMT2*}} under the three conditions ($f_1$), ($f'_2$), and ($f_3$). First, we let $\rho = \exp (-\alpha_n/n)$ and by \eqref{eq:growthu} we easily check that $u(r) \leq 1$ for any $r\in (\rho,1)$. This yields
\[
\int_{B\setminus B_\rho} \exp \big( \alpha_n |u|^{\frac n{n-1}+ f(|x|)} \big) dx \leq e^{\alpha_n} \frac{\om_{n-1}}n (1 -\rho^n) \leq e^{\alpha_n} \frac{\om_{n-1}}n.
\]
On $B_\rho$ we have
\begin{align*}
\int_{B_\rho} & \exp \big( \alpha_n |u|^{\frac n{n-1}+ f(|x|)} \big) dx \\
&=\int_{B_\rho} \exp \big( \alpha_n |u|^{\frac n{n-1}}|u|^{f(|x|)} \big) dx\\
&\leq \int_{B_\rho} \exp \Big( \alpha_n |u|^{\frac n{n-1}} \big(-\frac n{\alpha_n} \log |x|\big)^{\frac{n-1}n f(|x|)} \Big) dx\\
&=\int_{B_\rho} \exp \big( \alpha_n |u|^{\frac n{n-1}} \big)  \Big[ \exp\Big( \alpha_n |u|^{\frac n{n-1}}\Big[\big(-\frac n{\alpha_n} \log |x|\big)^{\frac{n-1}n f(|x|)} -1\Big] \Big) -1 \Big] dx\\
& \quad + \int_{B_\rho} \exp \big( \alpha_n |u|^{\frac n{n-1}} \big) dx\\
&\leq \int_{B_\rho} \exp \big( \alpha_n |u|^{\frac n{n-1}} \big)  \Big[ \exp\Big( \alpha_n |u|^{\frac n{n-1}}\Big[\big(-\frac n{\alpha_n} \log |x|\big)^{\frac{n-1}n f(|x|)} -1\Big] \Big) -1 \Big] dx \\
& \quad + \MT .
\end{align*}
Notice that for $r$ near $0$ we have
\begin{align*}
\exp \Big( \alpha_n |u|^{\frac n{n-1}}&\Big[\big(-\frac n{\alpha_n} \log r\big)^{\frac{n-1}n f(r)} -1\Big] \Big) -1 \\
&\leq \exp \Big( -n \log r \Big[\big(-\frac n{\alpha_n} \log r\big)^{\frac{n-1}n \frac{c}{(-\log r)^\ga}} -1\Big] \Big)-1 =:k(r).
\end{align*}
For $r$ near $0$, we have the following expansion
\begin{align*}
\big(-\frac n{\alpha_n} \log r\big)^{\frac{n-1}n \frac{c}{(-\log r)^\ga}} & 
= \exp \Big( \frac{n-1}n \frac{c}{(-\log r)^\ga} \log (-\frac{n}{\alpha_n} \log r) \Big) \\
&\sim 1 + \frac{n-1}n \frac{c}{(-\log r)^\ga} \log (-\frac{n}{\alpha_n} \log r),
\end{align*}
which implies 
\[
-n \Big[\big(-\frac n{\alpha_n} \log r\big)^{\frac{n-1}n \frac{c}{(-\log r)^\ga}} -1\Big] \log r \to 0
\]
as $r \to 0$, thanks to $\gamma > 1$. Hence, when $r$ is near $0$, we get
\[
k(r) \sim \frac{c(n-1)}{(-\log r)^{\ga-1}}\log (-\frac{n}{\alpha_n} \log r)=: h(r).
\]
Since $k$ and $h$ are continuous and strictly positive on $(0,\rho)$. Hence, there is $C'$ such that $$k(r) \leq C' h(r)$$ for any $r \in (0,\rho)$. Hence, together with \eqref{eq:growthu}, we estimate
\begin{align*}
 \int_{B_\rho} \exp \big( \alpha_n |u|^{\frac n{n-1}} \big) & \Big[ \exp\Big( \alpha_n |u|^{\frac n{n-1}}\Big[\big(-\frac n{\alpha_n} \log |x|\big)^{\frac{n-1}n f(|x|)} -1\Big] \Big) -1 \Big] dx \\
&\leq \om_{n-1}C'\int_0^\rho r^{-1}\frac{c(n-1)}{(-\log r)^{\ga-1}}\log (-\frac{n}{\alpha_n} \log r) dr\\
&=\om_{n-1}C' c(n-1)\int_{-\log \rho}^\infty t^{1-\gamma} \log (\frac{n}{\alpha_n}t) dt\\
&< +\infty
\end{align*}
since $\gamma >2$. Finally, we get
\begin{align*}
\int_{B} \exp \big( \alpha_n |u|^{\frac n{n-1}+ f(|x|)} \big) dx \leq & e^{\alpha_n} \frac{\om_{n-1}}n + \MT \\
&+ \om_{n-1}C' c(n-1)\int_{-\log \rho}^\infty t^{1-\gamma} \log \big(\frac{n}{\alpha_n}t \big) dt < +\infty.
\end{align*}
This proves \eqref{eq:SuperMT2*}.
\end{proof}

Note that the function $f(r) =r^\al$ with $\alpha >0$ satisfies all conditions of Theorem \ref{SuperMT*} above; hence Theorem \ref{SuperMT} follows from Theorem \ref{SuperMT*}. We further notice that because of the condition ($f_2$), we cannot apply Theorem \ref{SuperMT*} for the ahove function $f$ with $\alpha = 0$, namely, $f(r) \equiv 1$.

In the last paragraph of this section, we prove Lemma \ref{Sharpness}. Considering the sequence of Moser's functions 
\[
u_j(x) = \om_{n-1}^{-1/n}
\left\{
\begin{aligned}
&  (\log j)^{\frac{n-1}n} & & \mbox{ if } |x| \leq 1/j,\\
& -  \frac{\log |x|} { (\log j)^{1/n}} & & \mbox{ if } 1/j \leq |x| < 1,
\end{aligned}
\right.
\]
with $j \geq 1$. It is easy to check that $u_j \in \Wnr$ and $\|\na u_j\|_n =1$ for any $j$. Now we estimate $\int_B \exp\big((\ga + |x|^\al) |u_j|^{\frac n{n-1}}\big) dx$. For any $\ga > \al_n$, we get
\begin{align*}
\int_B \exp\big((\ga + |x|^\al) |u_j|^{\frac n{n-1}}\big) dx &\geq \int_{B_{1/j}(0)} \exp\big(\ga  |u_j|^{\frac n{n-1}}\big) dx\\
&=\om_{n-1} \int_0^{1/j} \exp\big(\frac{\ga n}{\al_n} \log j\big) r^{n-1} dr\\
&= \frac{\om_{n-1}}n \exp\Big(n\big(\frac{\ga}{\al_n} -1\big) \log j\Big) \to +\infty
\end{align*}
as $j\to +\infty$. To estimate the integral $\int_B \exp\big(\ga |u_j|^{\frac n{n-1} + |x|^\al}\big) dx$, we observe from the definition of $u_j$ that $u_j(x)\geq 1$ for any $x \in B_{1/j}(0)$ if  $j$ is large enough. Consequently, for $j$ large enough, we have
\begin{align*}
\int_B \exp\big(\ga |u_j|^{\frac n{n-1} + |x|^\al}\big) dx &\geq \int_{B_{1/j}(0)} \exp\big(\ga  |u_j|^{\frac n{n-1}+ |x|^\alpha}\big) dx\\
&\geq  \int_{B_{1/j}(0)} \exp\big(\ga  |u_j|^{\frac n{n-1}}\big) dx\\
&= \frac{\om_{n-1}}n \exp\Big(n\big(\frac{\ga}{\al_n} -1\big) \log j\Big) \to +\infty
\end{align*}
as $j\to +\infty$.
%%%%%%%%
%%%%%%%%
%%%%%%%%
%%%%%%%%

\section{The attainability of $\MT^i$ with $i=1,2$: Proof of Theorem \ref{Attain}}

Inspired by \cite{CC1986, FOR02}, a sequence $( u_j )_j\subset \Wnr$ is called a \textit{normalized concentrating sequence}, denoted by NCS for short, if
\begin{itemize}
 \item $\|\nabla u_j\|_{L^n(B)} =1$ for any $j$, 
 \item $u_j \rightharpoonup 0$ weakly in $\Wnr$ as $j \to +\infty$, and 
 \item $\int_{B \setminus B_a} |\nabla u_j|^n dx = o(1)_{j \nearrow +\infty}$ for any $a \in (0,1)$.
\end{itemize} 
Let us define the concentrating level by
\[
J:= \sup\lt\{\limsup_{j\to +\infty} \int_B \exp \big( \alpha_n |u_j|^{\frac n{n-1}} \big) dx\, :\, ( u_j )_j \subset \Wnr,\, ( u_j ) \, \text{\rm is NCS}\rt\}.
\]
It follows from the Moser--Trudinger inequality \eqref{eq:Moser--Trudinger}, see also Theorem \ref{SuperMT}, that $J$ is finite. Furthermore, it is well-known that
\[
J = |B| + |B| \exp \Big( \sum_{i=1}^{n-1} \frac 1i \Big);
\]
see \cite[Theorem 1.4]{FOR02}.

\subsection{Preliminaries}

In the following two lemmas, we estimate $\MT^i (\alpha)$ from below.

\begin{lemma}\label{Strict}
We have $$\MT^1(\al) > \MT$$ for any $\alpha >0$.
\end{lemma}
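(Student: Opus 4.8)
The goal is to show the strict inequality $\MT^1(\alpha) > \MT$, i.e.\ that adding the positive perturbation $|x|^\alpha$ inside the exponential genuinely increases the supremum. The natural strategy is to exhibit a single admissible function for which the perturbed functional strictly beats $\MT$; equivalently, I would take an (almost) maximizer for the classical $\MT$ and perturb it slightly so that it stays admissible but picks up a strictly positive contribution from the extra term. First I would note that $\MT$ itself is attained, say by some radial $u_0 \in \Wnr$ with $\int_B |\nabla u_0|^n\,dx \le 1$ and $\int_B \exp(\alpha_n |u_0|^{n/(n-1)})\,dx = \MT$; this is the classical result of Carleson--Chang / Flucher / Lin. (If one prefers to avoid invoking attainability of $\MT$, one can instead work with a maximizing sequence and pass to a subsequence, but using the extremal is cleaner.)

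The key computation is then immediate: since $u_0$ is admissible for $\MT^1(\alpha)$ and $|x|^\alpha > 0$ for $x \neq 0$,
\[
\MT^1(\alpha) \ge \int_B \exp\big( (\alpha_n + |x|^\alpha) |u_0|^{\frac{n}{n-1}} \big)\,dx
= \int_B \exp\big( \alpha_n |u_0|^{\frac{n}{n-1}} \big) \exp\big( |x|^\alpha |u_0|^{\frac{n}{n-1}} \big)\,dx.
\]
Now $\exp(|x|^\alpha |u_0|^{n/(n-1)}) \ge 1$ everywhere, with strict inequality on the set $\{x \in B : x \neq 0,\ u_0(x) \neq 0\}$. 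To conclude I must argue that this set has positive Lebesgue measure, equivalently that $u_0$ does not vanish a.e.; this follows since otherwise $\MT = \int_B \exp(0)\,dx = |B|$, contradicting the known value $\MT \ge J/|B|\cdot(\cdots)$ — more simply, $\MT > |B|$ because any nonzero admissible function gives a strictly larger integral, so the extremal is nonzero. Therefore the integrand is $\ge \exp(\alpha_n|u_0|^{n/(n-1)})$ pointwise and strictly larger on a set of positive measure, whence
\[
\MT^1(\alpha) \ge \int_B \exp\big( \alpha_n |u_0|^{\frac{n}{n-1}} \big) \exp\big( |x|^\alpha |u_0|^{\frac{n}{n-1}} \big)\,dx > \int_B \exp\big( \alpha_n |u_0|^{\frac{n}{n-1}} \big)\,dx = \MT.
\]

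The main (and essentially only) obstacle is the justification that the classical extremal $u_0$ exists and is not identically zero; everything else is a one-line monotonicity argument. If I wanted to sidestep the existence of an extremal entirely, the alternative plan is: take any fixed nonzero $u_0 \in \Cr$ with $\int_B |\nabla u_0|^n\,dx = 1$ (for instance a truncated Moser function), observe $\MT^1(\alpha) \ge \int_B \exp((\alpha_n+|x|^\alpha)|u_0|^{n/(n-1)})\,dx > \int_B \exp(\alpha_n|u_0|^{n/(n-1)})\,dx$, and then separately note that $\MT = \MT^1(0)$ in the limiting sense — but this does not directly give $\MT^1(\alpha) > \MT$ unless one already controls how close $u_0$ is to optimal. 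Hence the cleanest route really does go through the attained extremal for the classical Moser--Trudinger inequality, and I would simply cite \cite{CC1986, FOR02} (already referenced in this section) for its existence and nontriviality.
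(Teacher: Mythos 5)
Your proof is correct and is essentially identical to the paper's: both invoke the Carleson--Chang/Lin attainability of the classical constant $\MT$ by a radial extremal and then conclude by pointwise monotonicity of the integrand in the exponent. The paper's version is a one-liner because it takes the extremal to satisfy $\|\nabla u\|_{L^n(B)}=1$ from the start, which makes $u\not\equiv 0$ (and hence strictness of the inequality on a set of positive measure) immediate, whereas you spend a few extra lines checking nontriviality; that extra care is harmless but unnecessary.
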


\begin{proof}
Recall from \cite{CC1986, Lin1996} that the sharp constant $\MT$ is attained by some radial function $u \in \Wnr$ with $\|\nabla u\|_{L^n(B)} =1$. Hence
\[
\MT = \int_B \exp \big( \alpha_n |u|^{\frac n{n-1}} \big) dx < \int_B \exp \big( (\alpha_n+ |x|^\al) |u|^{\frac n{n-1}} \big) dx \leq \MT^1(\al),
\]
completing the proof.
\end{proof}

\begin{lemma}\label{Strict*}
We have $$\MT^2(\al) > J$$ for any $\alpha >0$.
\end{lemma}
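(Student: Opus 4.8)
**Proof proposal for Lemma \ref{Strict*} ($\MT^2(\al) > J$).**

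The plan is to exhibit, for each $\alpha>0$, a test function $v\in\Wnr$ with $\|\nabla v\|_{L^n(B)}\le 1$ such that
\[
\int_B \exp\big(\alpha_n |v|^{\frac{n}{n-1}+|x|^\alpha}\big)\,dx \;>\; J.
\]
The natural candidate is a small perturbation of an optimal \emph{normalized concentrating sequence} realizing $J$; concretely, take a Moser-type function $v=v_\eps$ concentrated on a ball $B_\delta$ of small radius $\delta$ around the origin, so that on $B_\delta$ one has $|v_\eps(x)|\ge 1$ (hence $|v_\eps|^{|x|^\alpha}\ge 1$ and the extra exponent \emph{helps}), while outside $B_\delta$ the function is small, $|v_\eps(x)|\le 1$ (so there the extra exponent \emph{hurts}, but only negligibly since $|x|^\alpha$ is tiny near $0$ in absolute terms and $v_\eps$ is small). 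The key point is that along such a sequence $\int_B \exp(\alpha_n|v_\eps|^{n/(n-1)})\,dx\to J$ (this is the content of the $\mathsf{FOR}$ computation of $J$, using that a maximizing normalized concentrating sequence attains the supremum defining $J$ in the limit), and the correction terms can be controlled to show the inequality is strict.

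First I would fix the explicit family: for $\delta\in(0,1)$ and a truncation at radius $\delta^{1+t}$ with $t>0$, set
\[
v(x)=\omega_{n-1}^{-1/n}
\begin{cases}
\big(\log\frac{1}{\delta^{t}}\big)^{(n-1)/n}\cdot c_\delta & |x|\le \delta^{1+t},\\[4pt]
\big(\log\frac{\delta}{|x|}\big)\big(\log\frac{1}{\delta^{t}}\big)^{-1/n}\cdot c_\delta & \delta^{1+t}\le|x|\le\delta,\\[4pt]
\text{(a fixed profile supported in $B$)} & \delta\le|x|<1,
\end{cases}
\]
normalized by a constant $c_\delta=1+O(1/\log(1/\delta))$ so that $\|\nabla v\|_{L^n(B)}=1$; this is precisely the type of function used in \cite{FOR02} to evaluate $J$. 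On the truncation ball $B_{\delta^{1+t}}$ one computes $\int \exp(\alpha_n|v|^{n/(n-1)})dx$ and shows it tends (as $\delta\to 0$) to $|B|\exp(\sum_{i=1}^{n-1}1/i)$, while the outer region contributes $|B|$ in the limit, recovering $J$. Then I would incorporate the exponent $|x|^\alpha$: on $B_{\delta^{1+t}}$ we have $|v|\ge 1$ for $\delta$ small, so $|v|^{|x|^\alpha}\ge 1$ and thus $\exp(\alpha_n|v|^{n/(n-1)+|x|^\alpha})\ge \exp(\alpha_n|v|^{n/(n-1)})$ there, in fact \emph{strictly} larger on a set of positive measure; on the annulus and outer region $|v|\le 1$ so the integrand decreases, but since $|x|^\alpha\le\delta^\alpha\to 0$ on $B_\delta\setminus B_{\delta^{1+t}}$ and $|v|$ is bounded there, the loss is $o(1)$ as $\delta\to0$ relative to the gain, which is bounded below by a fixed positive quantity. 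Comparing, for $\delta$ small enough one gets the strict inequality.

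The main obstacle is making the gain on the concentration ball \emph{quantitatively dominate} the losses. The delicate bookkeeping is that the quantity $|v|^{|x|^\alpha}-1$ on $B_{\delta^{1+t}}$, where $|v|\sim(\log(1/\delta^t))^{(n-1)/n}$ is large and $|x|^\alpha\le\delta^{(1+t)\alpha}$ is small, behaves like $(|x|^\alpha)\log|v|\sim \delta^{(1+t)\alpha}\log\log(1/\delta)$, which actually \emph{tends to $0$} — so one must be careful that the gain does not vanish. The cleanest fix is to not push the truncation radius to $0$: instead keep $\delta$ \emph{fixed and small} (not $\to 0$) and choose the inner profile so that $|v|\ge 1+\eta$ on a fixed-size ball, giving $|v|^{|x|^\alpha}-1\ge c(\alpha,\delta,\eta)>0$ on a set of fixed positive measure; simultaneously choose $\delta$ small enough that the approximation $\int_B\exp(\alpha_n|v|^{n/(n-1)})dx > J-c(\alpha,\delta,\eta)|B|/2$ holds (possible since the $\sup$ defining $J$ is approached by such functions). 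Then
\[
\MT^2(\al)\ \ge\ \int_B \exp\big(\alpha_n|v|^{\frac{n}{n-1}+|x|^\alpha}\big)dx\ \ge\ \int_B \exp\big(\alpha_n|v|^{\frac{n}{n-1}}\big)dx + \tfrac{c(\alpha,\delta,\eta)}{2}|B|\ >\ J,
\]
where the middle inequality uses $\exp(\alpha_n|v|^{n/(n-1)+|x|^\alpha})\ge\exp(\alpha_n|v|^{n/(n-1)})$ pointwise (from $|v|^{|x|^\alpha}\ge1$ wherever $|v|\ge1$, and $|v|^{|x|^\alpha}\ge|v|$... — actually one must check the sign: where $|v|<1$ the inequality reverses, so this pointwise bound is false globally and the estimate must be split into $\{|v|\ge1\}$ and $\{|v|<1\}$ with the outer-region loss absorbed into the slack above). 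I would therefore carry the argument region-by-region rather than pointwise, which makes the role of hypotheses transparent and isolates exactly where strictness comes from: the fixed positive-measure set where $|v|$ is bounded away from $1$ and $|x|^\alpha>0$.
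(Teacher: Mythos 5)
Your general framework --- perturb a Moser-type concentrating sequence and compare the supercritical integral to the concentration level $J$ --- is the same as the paper's, and you correctly flag the central difficulty: the set where $|v|\ge1$ (so the exponent $|x|^\alpha$ helps) is a shrinking ball near the origin, where $|x|^\alpha\to0$, so the pointwise gain there degenerates. But your proposed fix does not close the gap, and the rate mismatch is fatal. Quantitatively, on the concentration ball $B_{\delta^{1+t}}$ one has $|x|^\alpha\lesssim\delta^{(1+t)\alpha}$ and $\log|v|\sim\log\log(1/\delta)$, so the increment in the exponent,
\[
\alpha_n|v|^{\frac{n}{n-1}}\big(|v|^{|x|^\alpha}-1\big)\ \lesssim\ \delta^{(1+t)\alpha}\,\log(1/\delta)\,\log\log(1/\delta),
\]
vanishes \emph{polynomially} in $\delta$; meanwhile the deficit $J-\int_B\exp\big(\alpha_n|v|^{n/(n-1)}\big)\,dx$ for a Moser sequence is only of order $(\log(1/\delta))^{-n/(n-1)}$, vanishing \emph{logarithmically}. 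These rates cannot be reconciled by any choice of $\delta$, $t$, or $\eta$: the gain you harvest on the concentration ball is always swallowed by the error. Keeping $\delta$ ``fixed and small'' does not help either, since then $\int_B\exp\big(\alpha_n|v|^{n/(n-1)}\big)\,dx$ is bounded strictly below $J$ by a fixed amount, while the gain on the fixed ball is tiny and cannot make up that fixed deficit.

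The paper's proof avoids this by extracting the strict excess from the \emph{outer} region, not from the concentration ball. Fix $a\in(0,1)$ with $\frac{n}{n-1}+a^\alpha<\big(\frac{n}{n-1}\big)^2$. On the annulus $B_a\setminus B_{R\varepsilon}$ (where $R=-\log\varepsilon$) the Moser function satisfies $|u_\varepsilon|\le 1$, so raising it to the power $|x|^\alpha$ actually \emph{lowers} the integrand; nevertheless, the elementary bound $e^t\ge1+\frac{t^{n-1}}{(n-1)!}$ yields a positive contribution of order $c_\varepsilon^{-\frac{n}{n-1}-a^\alpha}\sim R^{-1-\frac{(n-1)a^\alpha}{n}}$. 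Because $a$ is a \emph{fixed} positive radius, the exponent of $c_\varepsilon^{-1}$ is under control, and the arithmetic constraint on $a$ guarantees that this term decays strictly slower than the $O(R^{-n/(n-1)})$ errors arising from the normalization of $c_\varepsilon$ and $A_\varepsilon$. On the inner ball the paper simply uses $|u_\varepsilon|\ge1$ to drop $|x|^\alpha$ and retain the critical contribution $|B|\exp\big(\sum_{i=1}^{n-1}\frac1i\big)+O(R^{-n/(n-1)})$, without trying to squeeze any gain there. The carefully chosen fixed radius $a$ together with the Taylor lower bound in the outer region is exactly the mechanism missing from your proposal.
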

\begin{proof}
We consider the test function $u_\varepsilon$ defined by 
\[
u_\varepsilon (x) =
\left\{
 \begin{aligned}
& c + c^{-\frac1{n-1}} \Big[-\frac{n-1}{\alpha_n} \log \Big(1 + \big(\frac{\om_{n-1}}n\big)^{\frac1{n-1}} \big( \frac{|x|}{\varepsilon} \big)^{\frac n{n-1}} \Big) + A\Big] &\mbox{ if } 0\leq |x| \leq R\varepsilon,\\
& c^{-\frac1{n-1}} \big(-\frac n{\alpha_n} \log |x|\big) &\mbox{ if } R\varepsilon \leq |x| \leq 1,
\end{aligned}
\right.
\]
where $R = -\log \ep$ and $c =c_\varepsilon$ and $A = A_\varepsilon$ are chosen in such a way that $u_\varepsilon \in W_0^{1,m}(B)$ and
\[
\|\nabla u_\varepsilon\|_{L^n(B)} =1.
\] 
%The function $u_\varepsilon$ is exactly the test function $f_\varepsilon$ in \cite[Section $6$]{Yang06} corresponding to $\Om = B$ and $\al =0$. 
In order for $u_\varepsilon \in W_0^{1,m}(B)$, we choose $c, A$ in such a way that $u_\varepsilon$ is continuous on $B$, i.e.,
\[
c^{\frac n{n-1}} + A = -\frac{n}{\alpha_n} \log (R\varepsilon) + \frac{n-1}{\alpha_n} \log \Big(1 + \big(\frac{\om_{n-1}}n\big)^{\frac1{n-1}}R^{\frac n{n-1}}\Big),
\]
which yields
\[
c^{\frac n{n-1}} + A = -\frac n{\alpha_n} \log \varepsilon + \frac1{\alpha_n} \log \frac{\om_{n-1}}n + O(R^{-\frac n{n-1}}).
\]
By a simple computation, we have
\[
\int_B |\nabla u_\varepsilon|^n dx = c^{-\frac n{n-1}} \Big( - \frac n{\alpha_n} \log \varepsilon + \frac1{\alpha_n} \log \frac{\om_{n-1}}n -\frac{n-1}{\alpha_n} \sum_{i=1}^{n-1} \frac1i + O(R^{-\frac n{n-1}})\Big).
\]
Hence, the requirement $\|\nabla u_\varepsilon\|_{L^n(B)} =1$ implies 
\begin{equation}\label{eq:c}
c^{\frac n{n-1}} = - \frac n{\alpha_n} \log \varepsilon + \frac1{\alpha_n} \log \frac{\om_{n-1}}n -\frac{n-1}{\alpha_n} \sum_{i=1}^{n-1} \frac1i + O(R^{-\frac n{n-1}}).
\end{equation}
Now we estimate $\int_B \exp \big( \alpha_n |u_\varepsilon|^{\frac n{n-1} + |x|^\alpha} \big) dx$. Notice that $c \to +\infty$ as $\varepsilon \searrow 0$. Let
\[
a = \big(\frac n{2(n-1/2)^2}\big)^{1/\alpha} < 1.
\]
From now on, we let $\varepsilon \ll 1$ be such that $R\varepsilon < a$. Clearly, for $|x| > R\varepsilon$, by the elementary inequality $e^t \geq 1 + t^n/n!$ with $t>0$, we obtain
\begin{align*}
\int_{B\setminus B_{R\varepsilon}} & \exp \big( \alpha_n |u_\varepsilon|^{\frac n{n-1} + |x|^\alpha} \big) dx \\
&\geq \int_{B\setminus B_{R\varepsilon}} \Big(1 + \frac{\alpha_n^{n-1}}{(n-1)!} |u_\varepsilon|^{n + {(n-1)|x|^\alpha}} \Big) dx\\
&=|B| + O((R\varepsilon)^n) + \frac{c^{-\frac n{n-1}} \alpha_n^{n-1}}{(n-1)!} \int_{B\setminus B_{R\varepsilon}}\big(-\frac n{\alpha_n} \log |x|\big)^{n + (n-1) |x|^\alpha} c^{-|x|^\alpha} dx\\
&\geq |B| + O(R^{-\frac n{n-1}}) + \frac{c^{-\frac n{n-1}} \alpha_n^{n-1}}{(n-1)!}\int_{B_a\setminus B_{R\varepsilon}}\big(-\frac n{\alpha_n} \log |x|\big)^{n + (n-1) |x|^\alpha} c^{-|x|^\alpha} dx\\
&\geq |B| + O(R^{-\frac n{n-1}}) + \frac{c^{-\frac n{n-1}- a^\alpha} \alpha_n^{n-1}}{(n-1)!}\int_{B_a\setminus B_{R\varepsilon}}\big(-\frac n{\alpha_n} \log |x|\big)^{n + (n-1) |x|^\alpha} dx\\
&=|B| + O(R^{-\frac n{n-1}}) + \frac{c^{-\frac n{n-1}- a^\alpha} \alpha_n^{n-1}}{(n-1)!}\Big(\int_{B_a}\big(-\frac n{\alpha_n} \log |x|\big)^{n + (n-1) |x|^\alpha} dx + o(1)\Big).
\end{align*}
On $B_{R\varepsilon}$, due to the monotonicity of $u_\varepsilon$, we have
\begin{align*}
u_\varepsilon(x) \geq u_\varepsilon(R\varepsilon) = c^{-\frac1{n-1}} \frac n{\alpha_n}\big[-\log \varepsilon - \log (-\log \varepsilon)\big]\geq 1
\end{align*}
provided $\varepsilon$ is small enough. Using the inequality $(1+ t)^{n/(n-1)} \geq 1 + \frac n{n-1} t$ for $t> -1$ and the estimates for $c$ and $A$, we get
\begin{align*}
\al_n |u_\varepsilon(x)|^{\frac n{n-1}}&\geq \al_n c^{\frac n{n-1}} -n \log \Big(1 + \big(\frac{\om_{n-1}}n\big)^{\frac1{n-1}} \big( \frac{|x|}{\varepsilon} \big)^{\frac n{n-1}} \Big) + \frac {n\al_n}{n-1} A\\
&=\frac{n \al_n}{n-1}(c^{\frac n{n-1}} + A) -\frac{\al_n}{n-1} c^{\frac n{n-1}} -n \log \Big(1 + \big(\frac{\om_{n-1}}n\big)^{\frac1{n-1}} \big( \frac{|x|}{\varepsilon} \big)^{\frac n{n-1}} \Big)\\
&= -n \log \varepsilon  + \log \frac{\om_{n-1}}n + \sum_{i=1}^{n-1} \frac 1i + O(R^{-\frac{n}{n-1}}) -n \log \Big(1 + \big(\frac{\om_{n-1}}n\big)^{\frac1{n-1}} \big( \frac{|x|}{\varepsilon} \big)^{\frac n{n-1}} \Big). 
\end{align*}
Hence, 
\begin{align}\label{eq:J}
\int_{B_{R\varepsilon}} \exp \big( \alpha_n |u_\varepsilon|^{\frac n{n-1} + |x|^\alpha} \big) dx &\geq \int_{B_{R\varepsilon}} \exp \big( \alpha_n |u_\varepsilon|^{\frac n{n-1}} \big) dx\notag\\
&\geq  e^{\sum_{i=1}^{n-1} \frac1i + O(R^{-\frac n{n-1}})} \varepsilon^{-n} \frac{\om_{n-1}}n \int_{B_{R\varepsilon}} \Big(1 + \big(\frac{\om_{n-1}}n\big)^{\frac1{n-1}} \big( \frac{|x|}{\varepsilon} \big)^{\frac n{n-1}} \Big)^{-n} dx\notag\\
&= |B| \exp \big( \sum_{i=1}^{n-1} \frac 1i \big) + O(R^{-\frac n{n-1}}).
\end{align}
Finally, we get
\begin{align*}
\int_{B} \exp \big( \alpha_n & |u_\varepsilon|^{\frac n{n-1} + |x|^\alpha} \big) dx \\
&\geq |B| + |B| \exp \big( \sum_{i=1}^{n-1} \frac 1i \big) + O(R^{-\frac n{n-1}})\\
&\quad + \frac{c^{-\frac n{n-1}- a^\alpha} \alpha_n^{n-1}}{(n-1)!}\Big(\int_{B_a}\Big(-\frac n{\alpha_n} \log |x|\Big)^{n + (n-1) |x|^\alpha} dx + o(1)\Big).
\end{align*}
By the choice of $a$, we have
\[
\frac{n}{n-1} + a^\alpha = \frac{n}{n-1} \Big(1 + \frac1{2(n-1/2)}\Big) = \frac{n}{n-1} \frac{n}{n-1/2} < \Big(\frac{n}{n-1}\Big)^2
\]
for any $n \geq 2$. Observe that $c^{\frac n{n-1}} \equiv R$ as $\varepsilon \to 0$. Hence, for $\varepsilon >0$ small enough, the term $O(R^{-\frac n{n-1}})$ is absorbed into the integral $\int_{B_a\setminus B_{R\varepsilon}}$, which yields
\[
\MT^2(\al) \geq \int_{B} \exp \big( \alpha_n |u_\varepsilon|^{\frac n{n-1} + |x|^\alpha} \big) dx > |B| + |B| \exp \Big( \sum_{i=1}^{n-1} \frac 1i \Big) =J.
\]
The proof follows.
\end{proof}

\begin{lemma}[concentration-compactness principle of Lions type] \label{CC}
Let $( u_j )_j$ be a sequence in $\Wnr$ with $\|\nabla u_j\|_{L^n(B)} =1$ for each $j$ and $u_j \rightharpoonup u$ weakly in $\Wnr$. Then for any $$p < (1 - \|\nabla u\|_{L^n(B)})^{-\frac1{n-1}}$$ there hold
\begin{equation}\label{eq:CC1}
\limsup_{j\to +\infty} \int_B \exp \big( p (\alpha_n+ |x|^\al) |u_j|^{\frac n{n-1}} \big) dx < +\infty
\end{equation}
and 
\begin{equation}\label{eq:CC2}
\limsup_{j\to +\infty} \int_B \exp \big( p \alpha_n |u_j|^{\frac n{n-1} + |x|^\alpha} \big) dx < +\infty.
\end{equation}
\end{lemma}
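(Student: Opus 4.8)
The plan is to adapt the classical proof of the concentration-compactness alternative for the Moser--Trudinger functional (as in Lions, and in \cite{FOR02, CC1986}) to our two perturbed functionals. The starting point is the standard decomposition trick: writing $v_j = u_j - u$, one has $v_j \rightharpoonup 0$ weakly in $\Wnr$, and by the Br\'ezis--Lieb type identity for the $L^n$-norm of gradients,
\[
\|\nabla v_j\|_{L^n(B)}^n = \|\nabla u_j\|_{L^n(B)}^n - \|\nabla u\|_{L^n(B)}^n + o(1) = 1 - \|\nabla u\|_{L^n(B)}^n + o(1).
\]
Set $\tau = \|\nabla u\|_{L^n(B)}^n \in [0,1)$. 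The Lions principle for \eqref{eq:MT} asserts that if $(v_j)$ is normalized in $\Wnr$ and converges weakly to zero, then $\exp(q\,\alpha_n |v_j|^{n/(n-1)})$ stays bounded in $L^1(B)$ for every $q < 1$; more generally, after rescaling, $\exp(\alpha_n (1-\tau)^{-1/(n-1)} |v_j|^{n/(n-1)})$ and all smaller exponents are bounded. So the real content is to pass from a bound on $v_j$ to a bound on $u_j$, absorbing both the cross term coming from $u$ and the extra factors $|x|^\alpha$.

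First I would reduce \eqref{eq:CC2} to \eqref{eq:CC1}. On the region $\{|x|\le \rho\}$ with $\rho = \exp(-\alpha_n/n)$ (exactly as in the proof of Theorem \ref{SuperMT*}), the pointwise bound \eqref{eq:growthu} applied to $u_j$ --- valid since $\|\nabla u_j\|_{L^n(B)}=1$ --- gives $|u_j(x)| \le (-\tfrac{n}{\alpha_n}\log|x|)^{(n-1)/n}$, hence $|u_j(x)|^{f(|x|)} \le (-\tfrac{n}{\alpha_n}\log|x|)^{\frac{n-1}{n}|x|^\alpha}$; the exponent $\frac{n}{n-1}+|x|^\alpha$ thus factors as in Theorem \ref{SuperMT*} and, since $|x|^\alpha \le \rho^\alpha =: \kappa$ on $B_\rho$ is a fixed small number, one controls $\exp(p\,\alpha_n|u_j|^{n/(n-1)+|x|^\alpha})$ by $\exp(p(1+\eta)\,\alpha_n|u_j|^{n/(n-1)})$ times an $x$-dependent but integrable weight, where $\eta\to 0$ as $\rho\to 0$; outside $B_\rho$ the bound $|u_j|\le 1$ makes the integrand bounded. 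So \eqref{eq:CC2} with exponent $p$ follows from \eqref{eq:CC1} with exponent $p(1+\eta)$, and since the admissible range of $p$ in \eqref{eq:CC1} is an open interval, shrinking $\rho$ handles all $p < (1-\tau)^{-1/(n-1)}$.

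For \eqref{eq:CC1} the decisive estimate is, for any $\lambda \in (1,\infty)$ and any $\delta>0$, the Young-type inequality
\[
|u_j|^{\frac{n}{n-1}} = |v_j + u|^{\frac{n}{n-1}} \le (1+\delta)\,|v_j|^{\frac{n}{n-1}} + C_\delta\,|u|^{\frac{n}{n-1}},
\]
so that $\exp(p(\alpha_n+|x|^\alpha)|u_j|^{n/(n-1)}) \le \exp(p(1+\delta)(\alpha_n+|x|^\alpha)|v_j|^{n/(n-1)})\cdot \exp(C_\delta p(\alpha_n+|x|^\alpha)|u|^{n/(n-1)})$. The second factor is a fixed function of $x$ lying in every $L^s(B)$, $s<\infty$, by Theorem \ref{SuperMT} applied to $u/\|\nabla u\|_{L^n(B)}$ (note $\|\nabla u\|_{L^n(B)}\le 1$, so $u$ itself is admissible up to normalization, and $C_\delta p (\alpha_n+|x|^\alpha) = s(\alpha_n+|x|^\alpha)\cdot$(scaling factor $<\alpha_n+|x|^\alpha$) for $s$ large but finite, which is still covered since the constant $\alpha_n$ may be replaced by any multiple after rescaling $u$). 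The first factor is handled by the Lions principle for $v_j$: because $\alpha_n + |x|^\alpha \le \alpha_n + 1$, we bound $\exp(p(1+\delta)(\alpha_n+|x|^\alpha)|v_j|^{n/(n-1)}) \le \exp(q\,\alpha_n|v_j|^{n/(n-1)})$ with $q = p(1+\delta)\frac{\alpha_n+1}{\alpha_n}$... \emph{but} this is too lossy near the boundary; instead one localizes: on $B_{\sigma}$ with $\sigma$ small the factor $\alpha_n+|x|^\alpha$ is $\le \alpha_n(1+\delta)$, while on $B\setminus B_\sigma$ the third defining property of an NCS --- $\int_{B\setminus B_\sigma}|\nabla u_j|^n \to 0$ --- forces $v_j\to u$ strongly there and one invokes Theorem \ref{SuperMT*} directly on the annulus. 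Then apply H\"older with exponents $r$ (on the $v_j$ factor) and $r'$ (on the $u$ factor), choosing $r>1$ so close to $1$ that $r\cdot p(1+\delta)^2 < (1-\tau)^{-1/(n-1)}$, which is possible precisely because $p<(1-\tau)^{-1/(n-1)}$; then $\exp(rp(1+\delta)^2\alpha_n|v_j|^{n/(n-1)})$ is bounded in $L^1$ by the Lions principle for $v_j$, and the $u$-factor is in $L^{r'}$ by the previous remark.

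The main obstacle, and the step requiring the most care, is the interplay between the weight $|x|^\alpha$ near the boundary $|x|=1$ and the sharp exponent $(1-\tau)^{-1/(n-1)}$ in the Lions bound: naively bounding $\alpha_n+|x|^\alpha$ by $\alpha_n+1$ destroys the sharp threshold, so one genuinely needs the two-region argument above --- exploiting that $|x|^\alpha$ is small where concentration can occur (near $0$) and that on any annulus away from $0$ the sequence is strongly convergent by the NCS property (here we use that the weak limit's relevant mass, and hence the loss, lives near the origin) --- combined with the freedom to choose $\delta$ and $r-1$ arbitrarily small. Once these localizations are set up, \eqref{eq:CC1} follows from the classical Lions principle (which itself is the $\alpha=0$ case, e.g. \cite[Theorem 1.3]{FOR02} or \cite{Lions1985}) applied to $v_j$, and \eqref{eq:CC2} follows from \eqref{eq:CC1} by the factorization reduction described above.
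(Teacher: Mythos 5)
Your proof has two genuine gaps, and both stem from over-complicating the argument with a Br\'ezis--Lieb decomposition.

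First, and most seriously, your treatment of the outer annulus $B\setminus B_\sigma$ invokes ``the third defining property of an NCS, $\int_{B\setminus B_\sigma}|\nabla u_j|^n\to 0$.'' But Lemma \ref{CC} does \emph{not} assume $(u_j)$ is a normalized concentrating sequence; it only assumes $\|\nabla u_j\|_{L^n(B)}=1$ and $u_j\rightharpoonup u$. (Indeed the lemma is applied in the paper precisely when $u\not\equiv 0$, where NCS fails by definition.) There is nothing in the hypotheses to force vanishing of gradient mass on the annulus, so this step collapses. The correct, and much simpler, handling of the annulus is the one already available from the radial pointwise estimate \eqref{eq:growthu}, which you cite but do not exploit here: for $a\le r<1$ one has $|u_j(r)|\le(-\tfrac{n}{\alpha_n}\log a)^{(n-1)/n}$ uniformly in $j$, so both integrands $\exp(p(\alpha_n+|x|^\alpha)|u_j|^{n/(n-1)})$ and $\exp(p\alpha_n|u_j|^{n/(n-1)+|x|^\alpha})$ are bounded by the fixed constant $a^{-pn(1+1/\alpha_n)}$ on $B\setminus B_a$, no matter how large the exponent coefficient. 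No strong convergence, no invocation of Theorem \ref{SuperMT*}, and no NCS structure is needed.

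Second, your claim that the $u$-factor $\exp(C_\delta p(\alpha_n+|x|^\alpha)|u|^{n/(n-1)})$ lies in \emph{every} $L^s(B)$ is false. Since $u$ is radial with $\|\nabla u\|_n\le 1$, the bound \eqref{eq:growthu} gives $\exp(C|u|^{n/(n-1)})\lesssim |x|^{-nC\|\nabla u\|_n^{n/(n-1)}/\alpha_n}$, which is integrable only for $C$ below a finite threshold. When you pick the H\"older exponent $r$ close to $1$, the conjugate exponent $r'$ and the constant $C_\delta$ both blow up, so you cannot close the H\"older step without an extra quantitative argument. The paper sidesteps this entirely: it applies Lions' concentration-compactness principle \cite[Section I.7]{Lions1985} \emph{directly to $(u_j)$}, not to $v_j=u_j-u$. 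That version states that if $\|\nabla u_j\|_n=1$ and $u_j\rightharpoonup u$ weakly, then $\limsup_j\int_B\exp(q\alpha_n|u_j|^{n/(n-1)})\,dx<\infty$ for every $q<(1-\|\nabla u\|_n^n)^{-1/(n-1)}$. Choosing $a$ so small that $q:=p(1+a^\alpha/\alpha_n)$ is still below this threshold, one has $p(\alpha_n+|x|^\alpha)\le q\alpha_n$ on $B_a$, and the inner integral is controlled at once, with no decomposition, no Young inequality, and no H\"older interpolation. The proof of \eqref{eq:CC2} then follows the same two-region scheme, using the extra observation that $|u_j|^{|x|^\alpha}\le(-\tfrac{n}{\alpha_n}\log|x|)^{(n-1)|x|^\alpha/n}$ is uniformly close to $1$ on $B_a$ when $a$ is small.
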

\begin{proof}
Taking $a >0$ small enough such that
\[
q = p \big(1+ \frac{a^\alpha}{\alpha_n} \big) < (1 - \|\nabla u\|_{L^n(B)})^{-\frac1{n-1}}.
\] 
From \eqref{eq:growthu}, we have
\[
|u_j(r)|\leq \big(-\frac{n}{\alpha_n} \log a \big)^{\frac{n-1}n}
\]
for a.e. $r \in (a,1)$. Consequence, 
\begin{equation}\label{eq:truncation1}
\int_{B\setminus B_a} \exp \big( p (\alpha_n+ |x|^\al) |u_j|^{\frac n{n-1}} \big) dx \leq a^{-pn (1+ \frac1{\alpha_n})} |B|.
\end{equation}
On the other hand, we have
\begin{align*}
\int_{B_a} \exp \big( p (\alpha_n+ |x|^\al) |u_j|^{\frac n{n-1}} \big) dx & \leq \int_{B_a} \exp \big( p (\alpha_n+ a^\al) |u_j|^{\frac n{n-1}} \big)dx \\
&\leq \int_{B} \exp \big( q \alpha_n |u_j|^{\frac n{n-1}} \big) dx.
\end{align*}
The classical concentration-compactness principle in \cite[Section I.7]{Lions1985} implies
\[
\limsup_{j\to +\infty} \int_{B_a} \exp \big( p (\alpha_n+ |x|^\al) |u_j|^{\frac n{n-1}} \big) dx < +\infty,
\]
which together with \eqref{eq:truncation1} proves \eqref{eq:CC1}. A similar argument also confirms \eqref{eq:CC2}.
\end{proof}

\subsection{Proof of Theorem \ref{Attain}: The case of $\MT^1(\al)$}

Let $( u_j )_j$ be a maximizing sequence for $\MT^1(\al)$. We can assume that 
\begin{itemize}
 \item $u_j\rightharpoonup u$ weakly in $\Wnr$ and
 \item $u_j \to u$ a.e. in $B$. 
\end{itemize} 
Suppose that $u\equiv 0$. By \eqref{eq:growthu} and Lebesgue's dominated convergence theorem, we can write
\begin{align*}
\int_{B\setminus B_a} \exp \big( (\alpha_n+ |x|^\al) |u_j|^{\frac n{n-1}} \big) dx
& =|B\setminus B_a| + o(1)_{j \searrow 0} \\
&= \int_{B\setminus B_a} \exp \big( \alpha_n|u_j|^{\frac n{n-1}} \big) dx + o(1)_{j \searrow 0}
\end{align*}
for arbitrary but fixed $a>0$. On the ball $B_a$ with $a < e^{-1/\alpha}$, still by \eqref{eq:growthu} and the monotonicity of the function $-t^\alpha \log t$ on $(0, a)$, we have
\begin{align*}
\int_{B_a} \exp \big( (\alpha_n+ |x|^\al) |u_j|^{\frac n{n-1}} \big) dx& \leq \int_{B_a} e^{-\frac n{\alpha_n} |x|^\alpha (-\log |x|)} \exp \big( \alpha_n |u_j|^{\frac n{n-1}} \big) dx \\
&\leq e^{-\frac n{\alpha_n} a^\alpha (-\log a)} \int_{B_a} \exp \big( \alpha_n |u_j|^{\frac n{n-1}} \big) dx.
\end{align*}
Hence
\begin{align*}
\int_{B} \exp \big( (\alpha_n+ |x|^\al) |u_j|^{\frac n{n-1}} \big) dx &= \Big( \int_{B_a} + \int_{B \setminus B_a} \Big) \exp \big( (\alpha_n+ |x|^\al) |u_j|^{\frac n{n-1}} \big) dx \\
& \leq e^{\frac n{\alpha_n} a^\alpha \log a} \int_B \exp \big( \alpha_n |u_j|^{\frac n{n-1}} \big) dx + o(1)_{j \searrow 0}\\
&\quad + (1-e^{\frac n{\alpha_n} a^\alpha \log a})\int_{B\setminus B_a} \exp \big( \alpha_n |u_j|^{\frac n{n-1}} \big) dx \\
&\leq e^{\frac n{\alpha_n} a^\alpha \log a} \MT + o(1)_{j \searrow 0} \\
& \quad + (1-e^{\frac n{\alpha_n} a^\alpha \log a})\int_{B\setminus B_a} \exp \big( \alpha_n |u_j|^{\frac n{n-1}} \big) dx.
\end{align*}
Letting $j \nearrow +\infty$ and $a \searrow 0$ we eventually get
 $$\MT^1(\al) \leq \MT,$$ 
 which is impossible by Lemma \ref{Strict}. Hence, $u\not\equiv 0$. This allows us to conclude that $\|\nabla u\|_{L^n(B)} > 0$. Hence, by Lemma \ref{CC}, the function
 \[
 \exp \big( (\alpha_n+ |x|^\al) |u_j|^{\frac n{n-1}} \big) \in L_p(B)
 \] 
 for some $p >1$. From this and by H\"older's inequality we can easily show that the sequence $\exp \big( (\alpha_n+ |x|^\al) |u_j|^{\frac n{n-1}} \big)$ is uniformly integrable. Now one can make use of the Vitali convergence theorem to realize that
\[
\MT^1(\al) = \lim_{j\to +\infty} \int_B \exp \big( (\alpha_n+ |x|^\al) |u_j|^{\frac n{n-1}} \big) dx =\int_B \exp \big( (\alpha_n+ |x|^\al) |u|^{\frac n{n-1}} \big) dx.
\]
A simple argument shows that $\|\nabla u\|_{L^n(B)} \leq 1$ and $u$ is indeed a maximizer for $\MT^1(\al)$.

\subsection{Proof of Theorem \ref{Attain}: The case of $\MT^2(\al)$}

Let $( u_j )_j$ be a maximizing for $\MT^2(\al)$. As before, we can assume that
\begin{itemize}
 \item $u_j\rightharpoonup u$ weakly in $\Wnr$ and
 \item $u_j \to u$ a.e. in $B$. 
\end{itemize} 
As in the case of $\MT^1(\al)$, we need to rule out the possibility of $u \equiv 0$. By way of contradiction, suppose that $u\equiv 0$. As before, by \eqref{eq:growthu} and Lebesgue's dominated convergence theorem, we can also deduce that
\begin{equation}\label{eq:aa1}
\begin{aligned}
\int_{B\setminus B_a} \exp \big( \alpha_n|u_j|^{\frac n{n-1} + |x|^\alpha }\big) dx &=|B\setminus B_a| + o(1)_{j \searrow 0} \\
&= \int_{B\setminus B_a} \exp \big( \alpha_n|u_j|^{\frac n{n-1}} \big) dx + o(1)_{j \searrow 0}
\end{aligned}
\end{equation}
for arbitrary but fixed $a>0$. There are two possible cases: either $( u_j )$ is a NCS or $( u_j )$ is not a NCS. Suppose that the maximizing sequence $( u_j )$ is NCS. %Because the embedding $W^{1,n}(B) \hookrightarrow L^n(B)$ is compact, the weak convergence $u_j\rightharpoonup 0$ in $\Wnr$ implies that $|\nabla u_j|^n \to \delta_0$ in the sense of measures. 
 Hence, by the first and third conditions in the definition of a NCS, in the present scenario, we must have
\[
\lim_{j\to +\infty} \int_{B_{a}}|\nabla u_j|^n dx = 1
\]
for any $a \in (0,1)$. Our aim is to estimate the integral $\int_{B} \exp ( \alpha_n |u_j|^{\frac n{n-1} + |x|^\alpha} ) dx$. Again by \eqref{eq:growthu} we have 
\begin{align*}
\int_{B_a} \exp &\big( \alpha_n |u_j|^{\frac n{n-1} + |x|^\alpha} \big) dx \\
 &\leq \int_{B_a} \exp \big(\alpha_n |u_j|^{\frac n{n-1}}(-\frac n{\alpha_n} \log |x|)^{\frac{n-1}n |x|^\alpha} \big) dx \\
&= \int_{B_a} \Big(\exp \big( \alpha_n |u_j|^{\frac n{n-1}}\big[ (-\frac n{\alpha_n} \log |x|)^{\frac{n-1}n |x|^\alpha} -1\big] \big) -1\Big) \exp \big( \alpha_n |u_j|^{\frac n{n-1}} \big) dx\\
&\quad + \int_{B_a} \exp \big(\alpha_n |u_j|^{\frac n{n-1}} \big) dx.
\end{align*}
Combining the previous estimate with \eqref{eq:aa1} we get
\begin{align*}
\int_{B} \exp &\big( \alpha_n |u_j|^{\frac n{n-1} + |x|^\alpha} \big) dx \\
&\leq\int_{B_a} \Big( \exp \Big( \alpha_n |u_j|^{\frac n{n-1}}\Big( (-\frac n{\alpha_n} \log |x|)^{\frac{n-1}n |x|^\alpha} -1\Big) \Big) -1 \Big) \exp \big( \alpha_n |u_j|^{\frac n{n-1}} \big) dx \\
&\quad + \int_{B} \exp \big( \alpha_n |u_j|^{\frac n{n-1}} \big) dx + o(1)_{j \searrow 0}.
\end{align*}
In the rest of our argument, we mainly show that the first integral on the right hand side of the preceding inequality is negligible. Since $a$ is arbitrary, we may fix $$a_0 = \exp( - \alpha_n/ n)$$ and consider $r \leq a \leq a_0$. Clearly, $-( n/ \alpha_n) \log r \geq 1$ for any $r \leq a \leq a_0$. Therefore, we can estimate that integral as follows
\begin{align*}
\int_{B_a} \Big( \exp &\Big( \alpha_n |u_j|^{\frac n{n-1}}\Big( (-\frac n{\alpha_n} \log |x|)^{\frac{n-1}n |x|^\alpha} -1\Big) \Big) -1 \Big) \exp \big( \alpha_n |u_j|^{\frac n{n-1}} \big) dx\\
&\leq \omega_{n-1}\int_{0}^a \Big(    \exp \Big( -n \log r \big( (-\frac n{\alpha_n} \log r)^{\frac{n-1}n r^\alpha} -1\big) \Big) -1   \Big) r^{-1} dr.
\end{align*}
Note that
\[
\exp \Big( -n \log r \big( (-\frac n{\alpha_n} \log r)^{\frac{n-1}n r^\alpha} -1\big) \Big) -1 
\sim -(n-1) r^\alpha \log r \, \log \big(-\frac n{\alpha_n} \log r \big)
\]
as $r \searrow 0$. Hence, thanks to $\alpha>0$, we know that
\[
\Big(    \exp \Big( -n \log r \big( (-\frac n{\alpha_n} \log r)^{\frac{n-1}n r^\alpha} -1\big) \Big) -1   \Big) r^{-1} \in L^1(0,a_0),
\] 
which yields
\[
\lim_{a \searrow 0} \int_{0}^a \Big(\exp \Big( -n \log r \Big( \big(-\frac n{\alpha_n} \log r \big)^{\frac{n-1}n r^\alpha} -1\Big) \Big) -1 \Big) r^{-1} dr =0.
\]
By letting $j\nearrow +\infty$ and then $a\searrow 0$, we obtain
\[
\MT^2(\al) \leq \limsup_{j\to +\infty} \int_B \exp \big( \alpha_n |u_j|^{\frac n{n-1}} \big) dx \leq J,
\]
which is impossible by Lemma \ref{Strict*}. Hence, the maximizing sequence $( u_j )_j$ is not a NCS. Consequently, there is a subsequence, still denoted by $(u_j)_j$, and $0< a, \de < 1$ such that
\[
\lim_{j\to +\infty} \int_{B_{a}}|\nabla u_j|^n dx \leq \de < 1.
\]
Let $\varphi \in \Cr$ be a cut-off function such that $0 \leqslant \varphi \leqslant 1$ and
\[
\varphi (x) =
\begin{cases}
0 & \text{ if } |x| \geq 1,\\
1 & \text{ if } |x| \leq 1/2.
\end{cases}
\]
We define
\[
\varphi_a = \varphi(\cdot/a)
\] 
for each $a >0$. For any $a < r_0$ we have $\nabla (\varphi_a u_j) = \vphi_a \nabla u_j + u_j \nabla \varphi_a$. Since $u_j\rightharpoonup 0$ weakly in $\Wnr$, we know that $u_j\nabla \varphi_a \to 0$ in $L_n(B)$. Hence
\[
\limsup_{j\to +\infty} \int_B |\na(\vphi_a u_j)|^n dx \leq \limsup_{j\to +\infty} \int_{B_a} |\nabla u_j|^n dx \leq \de.
\]
Because $\int_{B_a}|\nabla u_j|^n dx \leq \de < 1$ for all $j$, by Theorem \ref{SuperMT}, there exists $p >1$ such that
\[
\exp \big(\alpha_n |\varphi_a u_j|^{\frac{n}{n-1} +|x|^\alpha}\big) \in L_p(B).
\] 
In particular, because $\varphi_a \equiv 1$ in $B_{a/2}$, we conclude that
\[
\exp \big(\alpha_n |u_j|^{\frac n{n-1} + |x|^\alpha} \big) \in L_p(B_{a/2}).
\] 
Keep in mind that we are assuming $u\equiv 0$; hence, $\alpha_n |u_j|^{\frac n{n-1} + |x|^\alpha} \to 0$ a.e., which together with the Vitali convergence theorem implies that
\[
\lim_{j\to +\infty} \int_{B_{a/2}} \exp \big(\alpha_n |u_j|^{\frac n{n-1} + |x|^\alpha} \big) dx = |B_{a/2}|.
\]
Consequently, combining the preceding limits and \eqref{eq:aa1} with $a$ replaced by $a/2$ gives
\[
 \lim_{j\to +\infty} \int_{B} \exp \big(\alpha_n |u_j|^{\frac n{n-1} + |x|^\alpha} \big) dx = |B|,
\]
which is impossible by means of Lemma \ref{Strict*}. Hence, we must have $u\not\equiv 0$. By Lemma \ref{CC}, the function
\[
\exp \big( \alpha_n |u_j|^{\frac n{n-1} + |x|^\alpha} \big) \in L_p(B)
\]
for some $p >1$. Hence, still relying on the Vitali convergence theorem as in the case of $\MT^1 (\alpha)$, we can also deduce that
\[
\MT^2(\al) = \lim_{j\to +\infty} \int_B \exp \big(\alpha_n |u_j|^{\frac n{n-1} + |x|^\alpha} \big) dx =\int_B \exp \big(\alpha_n|u|^{\frac n{n-1} + |x|^\alpha} \big) dx.
\]
Now a simple argument shows that $\|\nabla u\|_{L^n(B)} \leq 1$; hence, $u$ is a maximizer for $\MT^2(\al)$.

%%%%%%%%
%%%%%%%%
%%%%%%%%
%%%%%%%%

\section{Application: Proof of Theorem \ref{thmApplication}}

This section is devoted to an existence result for problem \eqref{eq:Super}, namely
\[
\left\{
\begin{aligned}
-\Delta_n u &= f(x,u) && \mbox{ in } B,\\
u&> 0 && \mbox{ in } B,\\
 u &= 0 && \mbox{ on } \pa B,
\end{aligned}
\right.
\]
under the conditions $(F_1)$--$(F_5)$. Our aim is to look for a solution $u$ to \eqref{eq:Super} in the space $\Wnr$. 

%%%%%%%%
%%%%%%%%

\subsection{The variational formulation}

Define
\begin{equation*}\label{FunctionalI}
I(u) = \frac 1n \int_B |\nabla u|^n dx - \int_B F(x,u_+)dx
\end{equation*}
with $u \in \Wnr$, where $F(x,t)$ is already given in Introduction. Standard arguments show that $I$ is well-defined and of class $C^1(\Wnr, \R)$, and for any $v \in \Wnr$ there holds
\[
I'(u) v = \int_B |\na u|^{n-2} \na u \cdot \na v dx - \int_B f(x,u_+) v dx.
\]
Under the hypothesis that $f$ is continuous, we see from \eqref{CG>} that given $\beta > \alpha_0$, there exists some $C_\beta > 0$ such that
\begin{equation}\label{BoundFAbove}
|f(x,t)| \leq C_\beta \exp \big(\beta |t|^{\frac{n}{n-1} + |x|^\alpha } \big)
\end{equation}
for all $(x,t) \in B \times \R$. The condition $(F_2)$ implies the following well-known Ambrosetti--Rabinowitz condition:
\begin{enumerate}
\item[($F'_1$)] There exist $R_0 >0$, $\theta > n$ such that
\[
\theta F(x,t) \leq t f(x,t)
\]
for all $t \geq R_0$ and any $x \in B$. 
\end{enumerate}
It is worth noticing that there have been many works weakening the Ambrosett--Rabinowitz condition; for e.g. see \cite{LamLu2014}. However, to keep our work in a reasonable length and simply to demonstrate a simple application of our supercritical inequalities, we do not treat any possible replacement of $(F'_1)$ in the present paper. Indeed, we can fix any $\theta > n$ and choose $R_0 = \max\{R, \theta M\}$ with $R$ and $M$ from $(F_2)$. Now we rewrite $(F_2)$ as $(f(x,t)/F(x,t)) \geq 1/M>0$ for any $t \geq R$ to get
\[
\Big( \frac{F(x,t)}{e^{t/M}} \Big)' \geq 0
\]
for any $t \geq R$. Hence, the condition $(F_2)$ also yields the following condition:
\begin{enumerate}
\item[($F'_2$)] There exists $C>0$ such that
\[
F(x,t) \geq Ce^{t/M}
\]
for any $t \geq R$ and any $x \in B$.
\end{enumerate}
Finally, it follows immediately from $(F_3)$ and the definition of $F$ that
\begin{enumerate}
\item[($F'_3$)] $F(x,t) \geq 0$ for any $(x,t) \in B \times [0,+\infty)$.
\end{enumerate}

\begin{lemma}\label{lem-I-AtInfinity}
Assume $(F'_2)$. Then 
\[
\lim_{t \to +\infty} I(tu) = -\infty
\]
for any $u \in \Wnr \setminus \{ 0 \}$ with $u \geq 0$.
\end{lemma}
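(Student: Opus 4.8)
The plan is to exploit the Ambrosetti--Rabinowitz type consequence $(F'_2)$ of the hypotheses, which forces $F$ to grow at least exponentially in its second variable, so that the negative term $-\int_B F(x,tu_+)\,dx$ in $I(tu)$ overwhelms the $n$-Dirichlet term, which grows only polynomially in $t$. First I would fix $u\in\Wnr\setminus\{0\}$ with $u\geq 0$ and write
\[
I(tu) = \frac{t^n}{n}\int_B|\nabla u|^n\,dx - \int_B F(x,tu)\,dx.
\]
Since $u\geq 0$ and $u\not\equiv 0$, the set $\Omega_\delta = \{x\in B : u(x)\geq \delta\}$ has positive Lebesgue measure for some $\delta>0$; indeed one may choose $\delta$ so that $|\Omega_\delta|>0$. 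On $\Omega_\delta$ we have $tu(x)\geq t\delta$, which exceeds the threshold $R$ from $(F_2)$ once $t\geq R/\delta$.

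Next I would restrict the integral of $F$ to $\Omega_\delta$ and apply $(F'_2)$: for $t\geq R/\delta$,
\[
\int_B F(x,tu)\,dx \;\geq\; \int_{\Omega_\delta} F(x,tu)\,dx \;\geq\; C\int_{\Omega_\delta} e^{tu/M}\,dx \;\geq\; C\,|\Omega_\delta|\,e^{t\delta/M},
\]
using $F\geq 0$ on $B\times[0,\infty)$ from $(F'_3)$ to discard the complement, and monotonicity of $s\mapsto e^{s/M}$ together with $tu\geq t\delta$ on $\Omega_\delta$. Hence
\[
I(tu) \;\leq\; \frac{t^n}{n}\int_B|\nabla u|^n\,dx \;-\; C\,|\Omega_\delta|\,e^{t\delta/M}
\]
for all $t\geq R/\delta$. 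Because $|\Omega_\delta|>0$, $C>0$, $\delta>0$, and the right-hand side is a fixed polynomial in $t$ minus a positive constant times a genuine exponential in $t$, it tends to $-\infty$ as $t\to+\infty$; this proves the claim.

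The only point requiring a little care — and the closest thing to an obstacle — is justifying that $|\Omega_\delta|>0$ for some $\delta>0$, i.e. that a nonzero nonnegative $u\in\Wnr$ is bounded below by a positive constant on a set of positive measure; this is immediate since otherwise $u=0$ a.e., contradicting $u\not\equiv 0$. One also wants $\Omega_\delta$ to be measurable, which holds because $u$ is (a representative is) measurable. Everything else is a direct comparison, so the argument is essentially routine once $(F'_2)$ and $(F'_3)$ are in hand.
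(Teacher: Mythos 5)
Your proof is correct and matches the paper's argument essentially line for line: pick a level $\delta>0$ with $|\{u\geq\delta\}|>0$, use $(F'_2)$ to bound the $F$-integral below by $C|\{u\geq\delta\}|e^{t\delta/M}$ for $t\geq R/\delta$, and compare with the polynomial Dirichlet term. No comments beyond that.
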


\begin{proof}
Since $u\geq 0$ and $u\not\equiv 0$, there exist $a >0$ such that $|\{u\geq a\}| >0$. For $t> R/a$ we have from ($F_2'$)
\[
\int_B F(x,(t u)_+) dx \geq \int_{\{u\geq a\}} F(x,tu) dx \geq \int_{\{u\geq a\}} Ce^{tu/M} dx \geq C e^{ta/M} |\{u\geq a\}|.
\]
Using this we have
\[
I(tu) \leq \frac {t^n}n \int_B |\nabla u|^n dx - C e^{ta/M} |\{u\geq a\}|
\]
for any $t > R/a$. Hence the proof follows.
\end{proof}

From now on, let $\beta>\alpha_n$ be determined later.

\begin{lemma}\label{lem-I-SomeWhere}
Assume $(F_4)$. Then there exist $\delta, \rho > 0$ such that
\[
I(u) \geq \delta
\]
if $\|\na u\|_n = \rho$.
\end{lemma}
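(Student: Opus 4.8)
The plan is to show that the functional $I$ has the mountain-pass geometry near the origin, i.e. that $I(u)\geq\delta>0$ on a small sphere $\|\nabla u\|_n=\rho$. The starting point is the condition $(F_4)$, which says $\limsup_{t\searrow 0} nF(x,t)/t^n < \lambda_1(B)$ uniformly in $x$; fix $\mu < \lambda_1(B)$ with $nF(x,t)\leq \mu t^n$ for all $0\leq t\leq \tau$ and all $x$, for some small $\tau>0$. For the range $t\geq\tau$ I would use the growth bound \eqref{BoundFAbove}: given $\beta>\alpha_0$ there is $C_\beta$ with $|f(x,t)|\leq C_\beta\exp(\beta|t|^{n/(n-1)+|x|^\alpha})$, hence $F(x,t)\leq C_\beta' |t|\exp(\beta|t|^{n/(n-1)+|x|^\alpha})\leq C_\beta'' |t|^q \exp(\beta|t|^{n/(n-1)+|x|^\alpha})$ for any exponent $q>n$ we like (absorbing the lower powers on $\{t\geq\tau\}$). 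Combining the two regimes gives a pointwise bound of the shape
\[
F(x,u_+(x)) \leq \frac{\mu}{n} |u(x)|^n + C\, |u(x)|^q \exp\big(\beta |u(x)|^{\frac{n}{n-1}+|x|^\alpha}\big)
\]
valid for all $x\in B$, with $q>n$ at our disposal.

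Next I would integrate this over $B$ and estimate the two terms. For the first term, the variational characterization of $\lambda_1(B)$ gives $\int_B |u|^n dx \leq \lambda_1(B)^{-1}\int_B|\nabla u|^n dx$, so $\frac{\mu}{n}\int_B|u|^n \leq \frac{\mu}{n\lambda_1(B)}\|\nabla u\|_n^n$ with $\mu/\lambda_1(B)<1$. Hence
\[
I(u) \geq \frac1n\Big(1-\frac{\mu}{\lambda_1(B)}\Big)\|\nabla u\|_n^n - C\int_B |u|^q \exp\big(\beta|u|^{\frac{n}{n-1}+|x|^\alpha}\big)\,dx.
\]
For the remaining integral I use Hölder's inequality with conjugate exponents $s,s'$: split off $|u|^q$ into $L^{s'}(B)$ and keep $\exp(s\beta|u|^{n/(n-1)+|x|^\alpha})$ in $L^1$. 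If $\|\nabla u\|_n=\rho$, write $v=u/\rho$ so $\|\nabla v\|_n=1$ and $|u|^{n/(n-1)+|x|^\alpha} = \rho^{n/(n-1)+|x|^\alpha}|v|^{n/(n-1)+|x|^\alpha} \leq \rho^{n/(n-1)}|v|^{n/(n-1)+|x|^\alpha}$ once $\rho\leq 1$ (since $|x|^\alpha\geq 0$ makes the extra factor $\rho^{|x|^\alpha}\leq 1$). Choosing $\rho$ small enough that $s\beta\,\rho^{n/(n-1)}\leq \alpha_n$, Theorem~\ref{SuperMT} (the inequality \eqref{eq:SuperMT2}) bounds $\int_B \exp(s\beta|u|^{n/(n-1)+|x|^\alpha})\,dx$ by the finite constant $\MT^2(\alpha)$ uniformly over such $u$; meanwhile $\big\||u|^q\big\|_{s'} = \|u\|_{qs'}^q \leq C\|\nabla u\|_n^q = C\rho^q$ by the continuous Sobolev-type embedding $\Wnr \hookrightarrow L^{qs'}(B)$. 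Therefore $I(u)\geq \frac1n(1-\mu/\lambda_1(B))\rho^n - C'\rho^q$, and since $q>n$ this is $\geq \delta >0$ for $\rho$ chosen sufficiently small; then fix $\beta$ (any value in $(\alpha_0,\infty)$ works after $\rho$ is shrunk accordingly, so the earlier phrase ``$\beta>\alpha_n$ determined later'' is honored).

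The only delicate point is making sure the exponent on the exponential stays $\leq \alpha_n$ after the Hölder split: one must choose $s>1$ close to $1$ first (so $s\beta$ is only slightly more than $\beta$), and then choose $\rho$ small so that $s\beta\rho^{n/(n-1)}\leq\alpha_n$; simultaneously $s'$ is large but finite, so $\Wnr\hookrightarrow L^{qs'}$ still holds for every finite exponent (this is where the Moser–Trudinger regime is comfortably subcritical for the polynomial factor). Everything else is routine: the passage $|x|^\alpha\geq 0 \Rightarrow \rho^{|x|^\alpha}\leq 1$ for $\rho\leq 1$, and the comparison $|t|^{n/(n-1)+|x|^\alpha}\leq \max(|t|^{n/(n-1)}, |t|^{n/(n-1)+\sup|x|^\alpha})$ which is harmless since we only need it for normalized $v$ on the ball of radius $1$. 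I expect the bookkeeping with the three parameters $s$, $\beta$, $\rho$ to be the part that needs the most care, but no genuinely hard estimate is involved.
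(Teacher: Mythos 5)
Your proof is correct and follows essentially the same route as the paper's: the pointwise split $F(x,t)\leq\frac{\mu}{n}|t|^n+C|t|^q\exp(\beta|t|^{n/(n-1)+|x|^\alpha})$, the Poincar\'e bound via $\lambda_1(B)$, H\"older's inequality to separate $|u|^q$ from the exponential, normalization by $\|\nabla u\|_n=\rho$ so that \eqref{eq:SuperMT2} applies once $s\beta\rho^{n/(n-1)}\leq\alpha_n$, and the conclusion from $q>n$. The only cosmetic difference is that you obtain the bound on $F$ for large $t$ by integrating \eqref{BoundFAbove} directly, whereas the paper invokes $(F_2)$ to pass from the bound on $f$ to one on $F$; both are valid.
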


\begin{proof}
Given $q>n$, in view of $(F_2)$, $(F_4)$, and \eqref{BoundFAbove} we can choose some $\lambda < \lambda_1 (B)$ and some $C(\beta, \lambda,q)>0$ such that
\[
F(x,t) \leq \frac \lambda n |t|^n + C(\beta, \lambda,q) \exp(\beta |t|^{\frac{n}{n-1} + |x|^\alpha}) |t|^q
\]
for any $(x,t) \in B \times [0,+\infty)$. By H\"older's inequality, with $s>1$ and $1/s+1/r=1$, we get
\begin{align*}
\int_B \exp (\beta& |u|^{\frac{n}{n-1} + |x|^\alpha}) |u|^q dx \\
& \leq \Big( \int_B \exp (\beta r \|\na u\|_n^{\frac{n}{n-1}+ |x|^\alpha}     \big|\frac u{\|\na u\|_n} \big|^{\frac{n}{n-1} + |x|^\alpha}) dx \Big)^{1/r} \Big( \int_B |u|^{qs} dx \Big)^{1/s}.
\end{align*}
Hence, on one hand if we let $\|\na u\|_n \leq \sigma < 1$ with $\beta r \sigma^{\frac{n}{n-1} } < \alpha_n$, then our supercritical Moser--Trudinger inequality \eqref{eq:SuperMT2} tells us that
\begin{align*}
\int_B \exp (\beta& |u|^{\frac{n}{n-1} + |x|^\alpha}) |u|^q dx \leq (\MT^2 (\alpha))^{1/r} \Big( \int_B |u|^{qs} dx \Big)^{1/s}.
\end{align*}
On the other hand, because the embedding $\Wnr \hookrightarrow L_{qs}(B)$ is always continuous, there holds
\[
\Big( \int_B |u|^{qs} dx \Big)^{1/s} \leq C \|\na u\|_n^q
\]
for some dimensional constant $C>0$. Putting these facts together, we are able to estimate $I$ from below as follows
\[
I(u) \geq \frac 1n \Big( 1 - \frac \lambda{\lambda_1(B)} \Big) \|\na u\|_n^n - C \|\na u\|_n^q
\]
for some $C>0$. From this, the proof follows because $\lambda < \lambda_1(B)$ and $q > n$.
\end{proof}
%%%%%%%%
%%%%%%%%

\subsection{The Palais--Smale condition}

Now we prove that under the condition $(F_5)$, the functional $I$ satisfies the well-known Palais--Smale compactness condition. As remarked earlier, our condition $(F_5)$ is weaker than a similar condition in \cite{doO1996}, we cannot use the test function in \cite{doO1996}.

\begin{lemma}\label{lem-I-PS}
Assume $(F_4)$ and $(F_5)$. Then there exists some $j$ such that
\[
\max_{t \geq 0} I(tM_j) < \frac 1n \big( \frac{\alpha_n}{\alpha_0} \big)^{n-1},
\]
where $M_j = u_{1/j}$ with $u_{1/j}$ given in the proof of Lemma \ref{Strict*}, namely
\[
M_j (x) = 
\left\{
\begin{aligned}
&c_{1/j} + c_{1/j}^{-\frac1{n-1}} \Big[A_{1/j} -\frac{n-1}{\alpha_n} \log \Big(1 + \big(\frac{\om_{n-1}}n ( j |x| )^n\big)^{\frac1{n-1}}   \Big)  \Big]
&& \text{ if } |x| \leq \frac{\log j}j,\\
&c_{1/j}^{-\frac1{n-1}} \big(-\frac n{\alpha_n} \log |x|\big)  
 && \text{ if } \frac{\log j}j \leq |x|  .
\end{aligned}
\right.
\]
where $c_{1/j}$ and $A_{1/j}$ are chosen in such a way that $M_j \in W_0^{1,m}(B)$ and
\[
\|\nabla M_j\|_{L^n(B)} =1.
\] 
\end{lemma}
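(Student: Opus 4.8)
The plan is to analyse the maximiser of $t\mapsto I(tM_j)$ and argue by contradiction. Since $\|\nabla M_j\|_{L^n(B)}=1$ and $M_j\ge0$, we have $I(tM_j)=\frac1n t^n-\int_B F(x,tM_j)\,dx$ for $t\ge0$; moreover $(F_2)$ yields $(F'_2)$, so Lemma~\ref{lem-I-AtInfinity} gives $I(tM_j)\to-\infty$ as $t\to+\infty$, and with $I(0)=0$ and continuity the supremum over $[0,\infty)$ is attained at some $t_j\ge0$. Assume, for contradiction, that $\max_{t\ge0}I(tM_j)=I(t_jM_j)\ge\frac1n(\alpha_n/\alpha_0)^{n-1}$ for every $j$. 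Then $t_j>0$ (the right-hand side exceeds $I(0)$), so $t_j$ solves
\[
t_j^{\,n-1}=\int_B f(x,t_jM_j)M_j\,dx ,
\]
and since $F\ge0$ by $(F'_3)$ the same inequality forces $t_j^{\,n}\ge(\alpha_n/\alpha_0)^{n-1}$; writing $\kappa_j:=\alpha_0 t_j^{\,n/(n-1)}/\alpha_n$ we thus have $\kappa_j\ge1$ and $(t_j)$ bounded away from $0$.

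By $(F_5)$, for each $\epsilon>0$ there is $T_\epsilon$ with $f(x,t)\ge(\beta_0-\epsilon)t^{-1}\exp(\alpha_0 t^{n/(n-1)})$ for $t\ge T_\epsilon$, uniformly in $x$. From the explicit form of $M_j$ and the expansions of $c_{1/j}$, $A_{1/j}$ recorded in the proof of Lemma~\ref{Strict*} (in particular $c_{1/j}^{\,n/(n-1)}=\frac n{\alpha_n}\log j+\frac1{\alpha_n}\log\frac{\omega_{n-1}}{n}-\frac{n-1}{\alpha_n}\sum_{i=1}^{n-1}\frac1i+o(1)$ and $A_{1/j}=\frac{n-1}{\alpha_n}\sum_{i=1}^{n-1}\frac1i+o(1)$) one checks that $M_j$ is radially decreasing with $m_j:=M_j(\tfrac{\log j}{j})\to+\infty$; since $t_jM_j\ge t_jm_j\to+\infty$ on $B_{(\log j)/j}$, the critical-point identity gives, for $j$ large,
\[
t_j^{\,n}\ \ge\ (\beta_0-\epsilon)\int_{B_{(\log j)/j}}\exp\big(\alpha_0(t_jM_j)^{n/(n-1)}\big)\,dx .
\]
Using the \emph{exact} inequality $(1+\tau)^{n/(n-1)}\ge1+\frac n{n-1}\tau$ and the substitution $|x|=s/j$, one obtains on $B_{(\log j)/j}$
\[
\alpha_0(t_jM_j)^{n/(n-1)}\ \ge\ \kappa_j\Big(n\log j+\log\tfrac{\omega_{n-1}}{n}+\sum_{i=1}^{n-1}\tfrac1i+o(1)\Big)-n\kappa_j\log\Big(1+\big(\tfrac{\omega_{n-1}}{n}s^{n}\big)^{1/(n-1)}\Big),
\]
the $o(1)$ being uniform in $s$ as $j\to\infty$ (it comes from the $c_{1/j}$-expansion only). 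Exponentiating and absorbing the Jacobian $j^{-n}$, this produces
\[
t_j^{\,n}\ \ge\ (\beta_0-\epsilon)\,j^{\,n(\kappa_j-1)}\,\Psi(\kappa_j)\,(1+o(1))\quad\text{when }\kappa_j\text{ stays bounded},
\]
where $\Psi(\kappa)=\omega_{n-1}(\tfrac{\omega_{n-1}}{n})^{\kappa}e^{\kappa(1+\cdots+\frac1{n-1})}\int_0^\infty(1+(\tfrac{\omega_{n-1}}{n}s^{n})^{1/(n-1)})^{-n\kappa}s^{n-1}\,ds$ is finite, continuous and strictly positive on $[1,\infty)$, with $\Psi(1)=\frac{\omega_{n-1}}{n}e^{1+\cdots+\frac1{n-1}}$ because $\omega_{n-1}\int_0^\infty(1+(\tfrac{\omega_{n-1}}{n}s^{n})^{1/(n-1)})^{-n}s^{n-1}\,ds=1$ (this identity is implicit in \eqref{eq:J}); a cruder bound, valid without assuming $\kappa_j$ bounded, gives $t_j^{\,n}\ge c\,j^{\,n(\kappa_j-1)}e^{-C\kappa_j}$ with fixed $c,C>0$.

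Now the contradiction follows in two steps. First, $(t_j)$ is bounded: if $t_j\to+\infty$ along a subsequence then $\kappa_j\to+\infty$, and since $n(\kappa_j-1)\log j-C\kappa_j\to+\infty$ for $j$ large while $t_j^{\,n}=(\alpha_n\kappa_j/\alpha_0)^{n-1}$ is only polynomial in $\kappa_j$, the crude bound is violated. Hence $(\kappa_j)$ is bounded; then $(\beta_0-\epsilon)\Psi(\kappa_j)$ is bounded below and $t_j^{\,n}$ above, so $j^{\,n(\kappa_j-1)}$ must be bounded, forcing $\kappa_j\to1$. Using $\kappa_j\ge1$ (so $j^{\,n(\kappa_j-1)}\ge1$) and $\Psi(\kappa_j)\to\Psi(1)$, we get $t_j^{\,n}\ge(\beta_0-\epsilon)\frac{\omega_{n-1}}{n}e^{1+\cdots+\frac1{n-1}}(1+o(1))$ for every $\epsilon>0$. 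Recalling $\frac{\omega_{n-1}}{n}=|B|=\alpha_n^{\,n-1}/n^{n}$, we deduce
\[
\liminf_{j\to+\infty}t_j^{\,n}\ \ge\ \beta_0\,\frac{\alpha_n^{\,n-1}}{n^{n}}\,e^{1+\cdots+\frac1{n-1}}\ >\ \Big(\frac{\alpha_n}{\alpha_0}\Big)^{n-1},
\]
the last strict inequality being precisely the hypothesis $\beta_0>n^{n}/(\alpha_0^{\,n-1}e^{1+\cdots+\frac1{n-1}})$ in $(F_5)$. But this says $\liminf_j\kappa_j>1$, contradicting $\kappa_j\to1$; the proof is complete.

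The hard part is the sharp lower bound for $\int_{B_{(\log j)/j}}\exp(\alpha_0(t_jM_j)^{n/(n-1)})\,dx$ in the regime $\kappa_j\to1$: one must track the expansion of $\alpha_0(t_jM_j)^{n/(n-1)}$ near the origin down to its $O(1)$ term so that the Moser factor $e^{1+\cdots+\frac1{n-1}}$ and the constant $|B|=\alpha_n^{n-1}/n^{n}$ emerge with exactly the right weights — which is why the test function $M_j$ carries the precise constants $c_{1/j}$, $A_{1/j}$ of Lemma~\ref{Strict*} and why $(F_5)$ imposes that specific lower bound on $\beta_0$. The remaining ingredients — the change of variables, finiteness and continuity of $\Psi$, and the passage to the limit — are routine.
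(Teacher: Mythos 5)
Your argument is correct, and while its skeleton matches the paper's (contradiction via the critical-point identity $t_j^n=\int_B t_jM_j f(x,t_jM_j)\,dx$, the lower bound from $(F_5)$, restriction to the inner ball, and the conclusion $\beta_0\le n^n/(\alpha_0^{n-1}e^{1+\cdots+\frac{1}{n-1}})$), the execution differs in a way worth noting. The paper works in two stages: it first evaluates $M_j$ only at the inner boundary $|x|=\log j/j$ (using monotonicity) to pin down $(t_j)$ bounded with $t_j^n\to(\alpha_n/\alpha_0)^{n-1}$, and then, to capture the sharp constant $|B|e^{1+\cdots+\frac1{n-1}}$, it switches to the full-ball integral $\int_B\exp(\alpha_0|t_jM_j|^{n/(n-1)})\,dx$, bounds this from below by $\int_B\exp(\alpha_n|M_j|^{n/(n-1)})\,dx$, splits $B$ into $A_j=\{t_jM_j\ge R_\varepsilon\}$ and $B_j=B\setminus A_j$, uses the Vitali theorem on $B_j$, and quotes the identity \eqref{eq:J} from the proof of Lemma~\ref{Strict*} to get $\liminf_j\int_B\exp(\alpha_n|M_j|^{n/(n-1)})\,dx\ge|B|+|B|e^{1+\cdots+\frac1{n-1}}$. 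You instead stay entirely on the inner ball $B_{(\log j)/j}$, perform the rescaling $|x|=s/j$ and track the expansion of $\alpha_0(t_jM_j)^{n/(n-1)}$ down to its $O(1)$ term, obtaining the asymptotic $t_j^n\ge(\beta_0-\epsilon)j^{n(\kappa_j-1)}\Psi(\kappa_j)(1+o(1))$ with $\kappa_j=\alpha_0 t_j^{n/(n-1)}/\alpha_n\ge1$ and $\Psi(1)=\frac{\omega_{n-1}}{n}e^{1+\cdots+\frac1{n-1}}$; this replaces the $A_j/B_j$ decomposition and the appeal to Vitali and \eqref{eq:J}, and the contradiction is drawn from the incompatibility of $\kappa_j\to1$ (forced by boundedness of $j^{n(\kappa_j-1)}$) with $\liminf\kappa_j>1$ (forced by the $\Psi(1)$-lower bound and $(F_5)$). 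Your route is more self-contained and arguably sharper (it exhibits the rate $j^{n(\kappa_j-1)}$ explicitly), at the cost of a somewhat longer asymptotic computation; the paper's route is shorter by delegating the sharp inner-ball estimate to the already-established \eqref{eq:J}. Both hinge on the same strict inequality in $(F_5)$, and both are sound.
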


\begin{proof}
By way of contradiction, suppose that
\[
\max_{t \geq 0} I(tM_j) \geq \frac 1n \big( \frac{\alpha_n}{\alpha_0} \big)^{n-1}
\]
for all $j$. In view of Lemma \ref{lem-I-AtInfinity}, there is some $t_j>0$ such that 
\[
I(t_jM_j) = \max_{t \geq 0} I(tM_j) .
\]
Since $F(x,tM_j) \geq 0$ in $B$ and $\|\na M_j\|_n =1$, we deduce that
\[
t_j \geq \big( \frac{\alpha_n}{\alpha_0} \big)^\frac{n-1}n.
\]
Because $\frac d{dt} I(tM_j) \big|_{t = t_j} = 0$, we know that
\[
t_j^n = \int_B t_j M_j f(x, t_jM_j) dx.
\]
Given $\varepsilon \in (0 , \beta_0) $, by using $(F_5)$, there is some $R_\varepsilon >0$ such that
\[
t f(x,t) \geq (\beta_0 - \varepsilon) \exp(\alpha_0 |t|^{\frac n{n-1} })
\]
uniformly on $x$ and for any $t \geq R_\varepsilon$. Thus, by starting from large $j$ such that $t_jM_j \geq R_\varepsilon$ everywhere in the ball $B_{\log j/j}(0)$ and because $M_j$ is strictly decreasing, we can estimate
\begin{align*}
t_j^n & \geq (\beta_0 - \varepsilon) \int_{B_{\log j /j}(0)} \exp(\alpha_0 |t_j M_j|^{\frac n{n-1} }) dx\\
&\geq (\beta_0 - \varepsilon)\int_{B_{\log j /j}(0)} \exp \big(\alpha_0 |t_j M_j \big( \frac{\log j}j  \big) |^{\frac n{n-1} } \big) dx\\
&= (\beta_0 - \varepsilon) |B_{\log j /j}(0)| \exp \big(\alpha_0 |t_j M_j \big( \frac{\log j}j  \big) |^{\frac n{n-1} } \big)  \\
&= (\beta_0 - \varepsilon) \frac{\omega_{n-1}}{n} \exp \Big(-n \big[\log \frac{\log j}j \big] \big[\frac {\alpha_0}{\alpha_n} t_j^{\frac n{n-1} }\big(\frac{-\frac n{\al_n} \log \frac{\log j}j}{c_{1/j}^{n/(n-1)}}\big)^{\frac1{n-1}} - 1 \big] \Big).
\end{align*}
Notice that 
\[
\lim_{j\to \infty} \frac{-\frac n{\al_n} \log \frac{\log j}j}{c_{1/j}^{ n/(n-1)}} =1
\]
by \eqref{eq:c}. Hence $(t_j)_j$ is bounded. Furthermore, due to the presence of the term $ -n \log \frac{\log j}j$ in the preceding inequality, it also implies that
\[
\lim_{j \to +\infty} t_j = \big( \frac{\alpha_n}{\alpha_0} \big)^\frac{n-1}n.
\]
Denote
\[
A_j = \{ x \in B : t_j M_j(x) \geq R_\varepsilon\}, \quad B_j = B \setminus A_j.
\]
Clearly,
\begin{align*}
t_j^n & \geq (\beta_0 - \varepsilon) \int_B \exp(\alpha_0 |t_j M_j|^{\frac n{n-1}  }) + \int_{B_j} t_j M_j f(x, t_jM_j) dx\\
&\quad -(\beta_0 - \varepsilon) \int_{B_j} \exp(\alpha_0 |t_j M_j|^{\frac n{n-1}  }).
\end{align*}
Notice that $M_j \to 0$ a.e. in $B$ and $\chi_{B_j} \to 1$ a.e. in $B$. Therefore, we can apply the Vitali convergence theorem to conclude that
\[
\int_{B_j} t_j M_j f(x, t_jM_j) dx \to 0
\]
and
\[
\int_{B_j} \exp(\alpha_0 |t_j M_j|^{\frac n{n-1} }) \to |B|
\]
as $j \to +\infty$. We also note that
\begin{align*}
\int_B \exp(\alpha_0 |t_j M_j|^{\frac n{n-1} }) &\geq \int_B \exp(\alpha_n |M_j|^{\frac n{n-1} }) dx\\
&=\int_{B\setminus B_{\log j/j}(0)} \exp(\alpha_n |M_j|^{\frac n{n-1} }) dx + \int_{B_{\log j/i}(0)} \exp(\alpha_n |M_j|^{\frac n{n-1} })dx.
\end{align*}
Notice that 
\[
\lim_{j\to \infty} \int_{B\setminus B_{\log j/j}(0)} \exp(\alpha_n |M_j|^{\frac n{n-1} }) dx = |B|.
\]
Hence, letting $j\to \infty$ and using \eqref{eq:J},we get
\[
\lim_{j\to \infty} \int_B \exp(\alpha_n |M_j|^{\frac n{n-1} }) dx \geq |B| + |B| e^{1+ \cdots + \frac1{n-1}}.
\]
Passing to the limit as $j \to +\infty$, we obtain
\[
 \big( \frac{\alpha_n}{\alpha_0} \big)^{n-1} \geq (\beta_0 - \varepsilon) \frac{\omega_{n-1}}n e^{1+ \cdots + \frac1{n-1}},
\]
for any $\epsilon \in (0, \beta_0)$, which implies
\[
\beta_0 \leq \frac{n^n}{\alpha_0^{n-1} e^{1+ \cdots + \frac1{n-1}}}.
\]
This is a contradiction.
\end{proof}

Next we establish an important convergence result.

\begin{lemma}\label{lem-I-Convergence}
Let $(u_j)_j \subset \Wnr$ be a Palais--Smale sequence. Then, up to a subsequence, there is some $u \in \Wnr$ such that
\[
f(x,(u_j)_+) \to f(x,u_+)
\]
in $L_1 (B)$ and 
\[
|\nabla u_j|^{n-2}\nabla u_j \rightharpoonup |\na u|^{n-2} \na u
\]
weakly in $(L_{n/(n-1)}(B))^n$.
\end{lemma}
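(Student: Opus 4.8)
The plan is to argue along the lines of the classical Moser--Trudinger compactness analysis (as in \cite{doO1996, FOR02}), adapted to our supercritical nonlinearity. Let $(u_j)_j \subset \Wnr$ be a Palais--Smale sequence, so $I(u_j) \to c$ for some $c \in \R$ and $I'(u_j) \to 0$ in $(\Wnr)^*$. First I would derive boundedness of $(u_j)_j$ in $\Wnr$: testing $I'(u_j)$ against $u_j$ and combining with $I(u_j) \to c$, the Ambrosetti--Rabinowitz type condition $(F_1')$ gives
\[
c + o(1) + o(1)\|\na u_j\|_n \geq I(u_j) - \frac 1\theta I'(u_j) u_j \geq \Big(\frac 1n - \frac 1\theta\Big) \|\na u_j\|_n^n - C,
\]
where the constant $C$ absorbs the contribution of the region $\{(u_j)_+ < R_0\}$ using $(F_3)$ and continuity of $F$. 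Since $\theta > n$, this forces $\sup_j \|\na u_j\|_n < +\infty$. Hence, up to a subsequence, $u_j \rightharpoonup u$ weakly in $\Wnr$, $u_j \to u$ in $L^q(B)$ for every $q < \infty$ by the compact Sobolev embedding, and $u_j \to u$ a.e. in $B$.

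Next I would prove the $L^1$-convergence $f(x,(u_j)_+) \to f(x,u_+)$. The pointwise convergence $f(x,(u_j)_+) \to f(x,u_+)$ a.e. is immediate from continuity of $f$ and $u_j \to u$ a.e. To upgrade this to $L^1$-convergence it suffices, by Vitali's convergence theorem, to show $(f(x,(u_j)_+))_j$ is uniformly integrable together with an $L^1$-bound. The $L^1$-bound is the delicate point: one shows $\sup_j \int_B |f(x,(u_j)_+)| \, |u_j| \, dx < +\infty$ by testing $I'(u_j)$ against $u_j$ again, and then uses $(F_2)$ in the form $F(x,t) \leq M f(x,t)$ for $t \geq R$ (hence $f(x,t) \geq F(x,t)/M \geq 0$) to get $\int_{\{(u_j)_+ \geq 1\}} |f(x,(u_j)_+)| \, dx \leq \int_{\{(u_j)_+ \geq 1\}} |f(x,(u_j)_+)| \, |u_j| \, dx \leq C$, while on $\{(u_j)_+ < 1\}$ continuity of $f$ gives a trivial bound. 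For uniform integrability, fix $\beta \in (\alpha_0, \alpha_n)$; by the critical growth bound \eqref{BoundFAbove} we have $|f(x,(u_j)_+)| \leq C_\beta \exp(\beta |u_j|^{\frac{n}{n-1} + |x|^\alpha})$, and since $\|\na u_j\|_n$ is bounded by some $\Lambda < +\infty$, a normalization $v_j = u_j/\Lambda'$ with $\Lambda' > \Lambda$ has $\|\na v_j\|_n \leq 1$ with $\beta (\Lambda')^{\frac{n}{n-1}} < \alpha_n$ if $\Lambda'$ is close enough to $\Lambda$ (shrinking $\beta$ toward $\alpha_0 < \alpha_n$ as needed); then the supercritical Moser--Trudinger inequality \eqref{eq:SuperMT2} shows that $\exp(\beta r |u_j|^{\frac{n}{n-1} + |x|^\alpha})$ is bounded in $L^1(B)$ for some $r > 1$, hence $(f(x,(u_j)_+))_j$ is bounded in $L^r(B)$ and therefore uniformly integrable. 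Vitali then gives the claimed $L^1$-convergence.

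Finally, for the weak convergence $|\nabla u_j|^{n-2}\nabla u_j \rightharpoonup |\na u|^{n-2}\na u$ in $(L_{n/(n-1)}(B))^n$, I would first observe that $(|\nabla u_j|^{n-2}\nabla u_j)_j$ is bounded in $(L_{n/(n-1)}(B))^n$ since $(u_j)_j$ is bounded in $\Wnr$, so it has a weak limit $V$ along a subsequence; the task is to identify $V = |\na u|^{n-2}\na u$. The standard device is the Browder--Minty monotonicity trick for the $n$-Laplacian: one shows $\int_B (|\nabla u_j|^{n-2}\nabla u_j - |\nabla u|^{n-2}\nabla u) \cdot \nabla(u_j - u)\, dx \to 0$. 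To see this, write $I'(u_j)(u_j - u) = \int_B |\nabla u_j|^{n-2}\nabla u_j \cdot \nabla(u_j-u)\,dx - \int_B f(x,(u_j)_+)(u_j-u)\,dx$; the left side tends to $0$ because $I'(u_j)\to 0$ in $(\Wnr)^*$ and $(u_j - u)$ is bounded in $\Wnr$, while the second integral on the right tends to $0$ because $f(x,(u_j)_+) \to f(x,u_+)$ in $L^1(B)$ and $u_j - u \to 0$ in $L^\infty$-weak-$*$... more carefully, because $f(x,(u_j)_+)$ is bounded in $L^r(B)$ for some $r>1$ and $u_j - u \to 0$ in $L^{r'}(B)$ by the compact embedding. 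Hence $\int_B |\nabla u_j|^{n-2}\nabla u_j \cdot \nabla(u_j-u)\,dx \to 0$; subtracting $\int_B |\nabla u|^{n-2}\nabla u \cdot \nabla(u_j - u)\,dx \to 0$ (which follows from $u_j \rightharpoonup u$ in $\Wnr$) gives the desired monotonicity quantity tending to zero. By the elementary inequality $(|a|^{n-2}a - |b|^{n-2}b)\cdot(a-b) \geq c_n |a-b|^n$ for $n \geq 2$ (or the analogous inequality for $1<n<2$), this forces $\nabla u_j \to \nabla u$ in $L^n(B)$ along a further subsequence, and hence $|\nabla u_j|^{n-2}\nabla u_j \to |\nabla u|^{n-2}\nabla u$ in $L_{n/(n-1)}(B)$ strongly, which is stronger than the claimed weak convergence. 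The main obstacle is the uniform-integrability/$L^r$-boundedness step for $f(x,(u_j)_+)$: it is exactly here that the supercritical inequality \eqref{eq:SuperMT2} is essential, and one must be careful that the boundedness $\|\na u_j\|_n \leq \Lambda$ combined with $\beta > \alpha_0$ still leaves room (i.e. $\beta \Lambda^{\frac n{n-1}}$ can be kept below $\alpha_n$) — this is guaranteed because $\alpha_0 < \alpha_n$ but it is the place where the argument is genuinely non-routine.
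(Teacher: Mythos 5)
There is a genuine gap in the uniform integrability step, and it propagates into both halves of your argument. To run Vitali you want $f(x,(u_j)_+)$ bounded in $L^r(B)$ for some $r>1$, and you attempt this via the bound $|f(x,(u_j)_+)|\le C_\beta \exp(\beta|u_j|^{\frac n{n-1}+|x|^\alpha})$ followed by the normalization $v_j=u_j/\Lambda'$ and the supercritical inequality \eqref{eq:SuperMT2}. This requires $\beta\,(\Lambda')^{\frac n{n-1}}<\alpha_n$, i.e., $\alpha_0\,\Lambda^{\frac n{n-1}}<\alpha_n$ (even after pushing $\beta\searrow\alpha_0$). But $\Lambda=\sup_j\|\na u_j\|_n$ is only bounded by the Palais--Smale estimate, and nothing in the hypotheses of the lemma forces this bound to be below $(\alpha_n/\alpha_0)^{\frac{n-1}{n}}$. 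The inequality $\alpha_0<\alpha_n$ does not help: the issue is the size of $\Lambda$, not of $\alpha_0$. Indeed, in the proof of Theorem \ref{thmApplication} the bound $\|\na u_j\|_n^n<(\alpha_n/\alpha_0)^{n-1}$ is only available after assuming $u_\infty\le 0$, which is exactly the situation one wants to rule out. Thus invoking it here would make the argument circular, and the lemma as stated concerns an arbitrary Palais--Smale sequence in any case.

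The paper circumvents this by not trying to control $f(x,(u_j)_+)$ in $L^r$ at all. For the $L^1$-convergence it appeals to the de Figueiredo--Miyagaki--Ruf convergence lemma, which needs only $\sup_j\int_B f(x,(u_j)_+)u_j\,dx<+\infty$ (available directly from the Palais--Smale bounds) together with a.e.\ convergence; no Moser--Trudinger estimate on the sequence itself enters. For the gradient convergence the same gap reappears in your step $\int_B f(x,(u_j)_+)(u_j-u)\,dx\to 0$, where you again use the unproven $L^r$-bound. The paper instead exploits the radial estimate \eqref{eq:growthu}, which shows $(u_j)$ is uniformly bounded on $B\setminus B_\epsilon$ for each fixed $\epsilon>0$; on that set dominated convergence gives $\int_{B\setminus B_\epsilon}|f(x,(u_j)_+)u_j-f(x,u_+)u|\,dx\to 0$, from which the monotonicity argument is carried out locally to get $\na u_j\to\na u$ a.e.\ in $B$, and then the $L^{n/(n-1)}$-bound on $|\na u_j|^{n-2}\na u_j$ upgrades this to the claimed weak convergence. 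Your plan to deduce \emph{strong} $L^n$ gradient convergence globally is also more than is established (or needed) in the paper, and it too hinges on the integral over the whole ball tending to zero, which again runs through the missing $L^r$-bound. If you want to salvage the Vitali route you would have to first localize away from the origin exactly as the paper does; the supercritical Moser--Trudinger inequality \eqref{eq:SuperMT2} is used elsewhere in the application, but not at this particular step.
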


\begin{proof}
Let $(u_j)_j \subset \Wnr$ be a Palais--Smale sequence, namely,
\begin{equation}\label{eq:PS-1}
\frac 1n \int_B |\nabla u_j|^n dx - \int_B F(x,(u_j)_+) dx \to c
\end{equation}
and
\begin{equation}\label{eq:PS-2}
\Big| \int_B |\nabla u_j|^{n-2}\nabla u_j \cdot \nabla \phi dx -  \int_B f(x,(u_j)_+)\phi dx \Big| = o(\|\na \phi\|_n)_{j \to +\infty}
\end{equation}
for any $\phi \in \Wnr$. Making use of $(F'_2)$, it is routine to see from \eqref{eq:PS-1} and \eqref{eq:PS-2} that $(u_j)_j$ is bounded in $\Wnr$. Consequently, there hold
\[
|\nabla u_j|^{n-2}\nabla u_j \in [L_{n/(n-1)}(B)]^n, \quad F(x,(u_j)_+) \in L^1(B), \quad f(x,(u_j)_+)u_j \in L^1(B).
\]
Passing to a subsequence, if necessary, there is some $u \in \Wnr$ such that
\begin{itemize}
 \item $u_j \rightharpoonup u$ in $\Wnr$,
 \item $u_j \to u$ in $L^p(B)$ for any $p \geq 1$, and
 \item $u_j \to u$ a.e. in $B$.
\end{itemize} 
Now we can apply a general $L^1$-convergence originally due to de Figueiredo, Miyagaki, and Ruf \cite{FMR, doO1996} to conclude that
\[
f(x,(u_j)_+) \to f(x,u_+)
\]
in $L_1 (B)$. Now we shift to the second convergence of the lemma. From \eqref{eq:growthu}, we have 
\[
|u_j(r)| \leq \Big(- \frac n{\al_n} \log r \Big)^{\frac{n-1}n} \|\na u_j\|_n,
\]
which implies $(u_j)_j$ is uniformly bounded in $B\setminus B_\epsilon(0)$ for any $\epsilon >0$. Since
\[
f(x,(u_j)_+) u_j \to f(x,u_+) u
\]
a.e. in $B$, the Lebesgue dominated convergence theorem implies that
\begin{equation}\label{eq:limfu}
\lim_{j\to \infty} \int_{B\setminus B_\epsilon(0)}|f(x,(u_j)_+) u_j - f(x,u_+) u| dx =0
\end{equation}
for any $\ep >0$. Repeating the proof of {\bf Assertion $2$} in \cite{doO1996} and using \eqref{eq:limfu}, we obtain
\[
\lim_{j\to \infty} \int_{B\setminus B_{\epsilon}(0)} \big(|\na u_j|^{n-2} \na u_j - |\na u|^{n-2} \na u\big)\big(\na u_j - \na u\big) dx =0,
\]
for any $\epsilon >0$. Hence $\na u_j \to \na u$ a.e. in $B$. Since $|\na u_j|^{n-2} \na u_j$ is bounded in $(L_{n/(n-1)}(B))^n$, we have 
\[
|\na u_j|^{n-2} \na u_j \to |\na u|^{n-2} \na u
\]
in $(L_p(B))^n$ for any $1< p < \frac n{n-1}$. This concludes the second convergence of the lemma. The proof is complete.
\end{proof}

%%%%%%%%
%%%%%%%%

\subsection{Proof of Theorem \ref{thmApplication}}

We are now in position to prove Theorem \ref{thmApplication}. Indeed, in view of Lemmas \ref{lem-I-AtInfinity} and \ref{lem-I-SomeWhere}, the functional $I$ has a mountain-pass geometry. Hence, we can apply a well-known Mountain-Pass lemma due to Brezis--Nirenberg \cite{BN} to conclude that there exists a Palais--Smale sequence $(u_j)_j \subset \Wnr$ of some level $\mathscr C$. Then, Lemma \ref{lem-I-Convergence} and \eqref{eq:PS-2} tell us that there is a weak solution $u_\infty$ of the equation
\begin{equation}\label{eq:Super*}
\left\{
\begin{aligned}
-\Delta_n u_\infty &= f(x,(u_\infty)_+) && \mbox{ in } B,\\
 u_\infty &= 0 && \mbox{ on } \pa B,
\end{aligned}
\right.
\end{equation}
In this sense, we conclude from \eqref{eq:PS-1} that
\begin{equation}\label{eq:Q}
\frac 1n \int_B |\nabla u_j|^n dx - \int_B F(x,(u_j)_+) dx \to \mathscr C
\end{equation}
as $j \to +\infty$. From ($F_2$), Lemma \ref{lem-I-Convergence}, and the Lebesgue dominated convergence theorem, we have
\[
\lim_{j\to \infty} \int_B F(x,(u_j)_+) dx = \int_B F(x,(u_\infty)_+) dx,
\]
which after passing \eqref{eq:Q} to the limit gives
\[
\lim_{j \to +\infty} \int_B |\nabla u_j|^n dx =n \Big( \mathscr C + \int_B F(x,(u_\infty)_+) dx \Big).
\]
Suppose that $u_\infty\leq 0$. In this sense and thanks to Lemma \ref{lem-I-PS}, we get
\[
\lim_{j \to +\infty} \int_B |\nabla u_j |^n = n \mathscr C < \Big( \frac{\alpha_n}{\alpha_0} \Big)^{n-1}. 
\]
Hence, we may fix some small $\varepsilon >0$ in such a way that
\[
n\mathscr C - \varepsilon \leq \|\na u_j\|_n^n \leq n\mathscr C + \varepsilon < \Big( \frac{\alpha_n}{\alpha_0} \Big)^{n-1}
\]
for any large $j$. Hence there is some $p>1$ close enough to $1$ such that
\[
p \alpha_0 \|\na u_j\|_n^{n/(n-1)} < \alpha_n
\]
for large $j$. For any $r \in (0,1)$, we have
\begin{align*}
\int_B |f(x,(u_j)_+)|^p dx & = \int_{B_r(0)} |f(x,(u_j)_+)|^p dx + \int_{B\setminus B_r(0)} |f(x,(u_j)_+)|^p dx\\
&\leq C_{p\alpha_0}^p \int_{B_r(0)} \exp(p \alpha_0 |u_j|^{\frac{n}{n-1} + |x|^\alpha}) dx + \int_{B\setminus B_r(0)} |f(x,(u_j)_+)|^p dx\\
&=C_{p \alpha_0}^p \int_{B_r(0)} \exp \Big(p \alpha_0 \| \na u_j\|_n^{\frac{n}{n-1} + |x|^\alpha} \big|\frac {u_j}{\|\na u_j\|_n} \big|^{\frac{n}{n-1} + |x|^\alpha} \Big) dx \\
&\quad + \int_{B\setminus B_r(0)} |f(x,(u_j)_+)|^p dx\\
&\leq C_{p \alpha_0}^p \int_{B_r(0)}\exp \Big(p \alpha_0 \|\na u_j\|_n^\frac{n}{n-1} \big(\frac{\al_n}{p\al_0}\big)^{\frac{n-1}n |x|^\alpha} \big|\frac {u_j}{\|\na u_j\|_n} \big|^{\frac{n}{n-1} + |x|^\alpha} \Big) dx \\
&\quad + \int_{B\setminus B_r(0)} |f(x,(u_j)_+)|^p dx.
\end{align*}
Notice that 
\[
\lim_{r \to 0} \sup_{B_r(0)} \big(\frac{\al_n}{p\al_0}\big)^{\frac{n-1}n |x|^\alpha} = 1.
\]
So we can choose $r \in (0,1)$ such that 
\[
p \alpha_0 \| \nabla u_j\|_n^\frac{n}{n-1} \big(\frac{\al_n}{p\al_0}\big)^{\frac{n-1}n |x|^\alpha} \leq \al_n
\]
for any $|x| \leq r$. On the other hand, there exists $R$ such that $|u_j| \leq R$ on $B\setminus B_r$ for any $j$ (this is derived from \eqref{eq:growthu}). Hence,
\[
\int_B |f(x,(u_j)_+)|^p dx \leq C_{p \alpha_0}^p \int_{B}\exp \Big(\al_n \big|\frac {u_j}{\|\na u_j\|_n} \big|^{\frac{n}{n-1} + |x|^\alpha} \Big) dx + |B| \sup_{B \times [0,R)} |f(x,t)|^p.
\]
We again apply our inequality \eqref{eq:SuperMT2} to get
\[
\sup_{j} \int_B |f(x,(u_j)_+)|^p dx < +\infty.
\]
Since $f(x,(u_j)_+) \to 0$ a.e. in $B$, there holds
\[
\lim_{j\to \infty} \int_B |f(x,(u_j)_+)|^q dx =0
\]
for any $1< q < p$. Keep in mind that $\|u_j\|_s \leq C \|\na u_j\|_n$ for any $n < s< +\infty$. Using H\"older's inequality and the previous limits, we easily see that
\[
\lim_{j\to \infty} \int_{B} f(x,(u_j)_+) u_j dx =0.
\]
Using this limit and \eqref{eq:PS-2} with $\phi$ replaced by $u_j$ we deduce that
\[
u_j \to 0
\]
strongly in $\Wnr$, which is a contradiction because $n \mathscr C>0$. Thus $(u_\infty)_+ \not\equiv 0$. 

Since the regularity of $u_\infty$ follows from standard arguments, it remains to check $u_\infty >0$ in $B$. Indeed, let
\[
g(x) =f(x,(u_\infty)_+) \geq 0.
\] 
Clearly, $g$ is a radially symmetric function and $g\geq 0$. (Hence, we do not distinguish $g(x)$ and $g(|x|)$.) Notice that $g\not\equiv 0$ since otherwise we would have $-\De_n u_\infty =0$ in $B$ and $u_\infty =0$ on $\pa B$. A standard argument implies that $u_\infty \equiv 0$, which is a contradiction. Hence, this and the continuity and non-negativity of $g$ imply that there is some $r_0 \in (0,1]$ such that $\int_0^r g(s) s^{n-1} ds >0$ for any $r \in (r_0,1)$. However, making use of \eqref{eq:Super*} we know that that 
\[
-|u_\infty'(r)|^{n-2} u_\infty'(r) = \frac1{r^{n-1}} \int_0^r g(s) s^{n-1} ds,
\]
for any $r \in (0,1)$. Hence, on one hand, we have $u_\infty'(r) \leq 0$ for any $r \in (0,1)$, however, on the other hand we further have $u_\infty'(r) < 0$ for any $r \in (r_0,1)$. Putting these facts together, we deduce that $u_\infty(r) > 0$ for any $r \in (0,1)$. Thus, $u_\infty$ is indeed a positive solution to \eqref{eq:Super}.

%The last remark is that if $f(x,t) >0$ for any $(x,t) \in B\times (0,+\infty)$ then we can show that $u_\infty >0$ in $B$. Hence, the equation \eqref{eq:Super} has a strictly positive solution. 

%%%%%%%%
%%%%%%%%
%%%%%%%%
%%%%%%%%

\section*{Acknowledgments}

The research of Q.A.N is funded by the Vietnam National University, Hanoi (VNU) under project number QG.19.12. The research of V.H.N is partially funded by the Simons Foundation Grant Targeted for Institute of Mathematics, Vietnam Academy of Science and Technology. 

%%%%%%%%%%%%%%%%%%%%
%%%%%%%%%%%%%%%%%%%%

\addtocontents{toc}{\protect\setcounter{tocdepth}{0}}

\section*{ORCID iDs}

\noindent Qu\cfac oc Anh Ng\^o: 0000-0002-3550-9689

\noindent Van Hoang Nguyen: 0000-0002-0030-5811

\addtocontents{toc}{\protect\setcounter{tocdepth}{2}}

%%%%%%%%%%%%%%%%%%%%
%%%%%%%%%%%%%%%%%%%%

\end{document}